\journal{Journal of \LaTeX\ Templates}
\newtheorem{Theorem}{Theorem}
\newtheorem{Definition}{Definition}
\newtheorem{Lemma}{Lemma}
\newtheorem{Remark}{Remark}
\begin{document}

\begin{frontmatter}

\title{Quadrilateral grid generation supported on complex 
        internal boundaries using spectral methods}



\author[mymainaddress]{Sa\'ul E. Buitrago Boret\corref{mycorrespondingauthor}}
\ead{sbutrago@usb.ve}

\author[mymainaddress]{Oswaldo J. Jim\'enez P.}
\cortext[mycorrespondingauthor]{Corresponding author}
\ead{oswjimenez@usb.ve}

\address[mymainaddress]{Dpto. C\'omputo Cient\'{\i}fico y Estad\'{\i}stica, Universidad Sim\'on
Bol\'{\i}var, Caracas, Venezuela}


\begin{abstract}
This work concerns with the following problem.
Given a two-dimensional domain whose boundary is a closed polygonal line with 
internal boundaries defined also by polygonal lines,
it is required to generate a grid consisting only of quadrilaterals with the 
following features: 
(1) conformal, that is, to be a tessellation of the two-dimensional domain such
that the intersection of any two quadrilaterals is a vertex, an edge or empty 
(never a portion of one edge),
(2) structured, which means that only four quadrilaterals meet at a single node
and the quadrilaterals that make up the grid need not to be rectangular, and 
(3) the mesh generated must be supported on the internal boundaries.
The fundamental technique for generating such grids, is the deformation of 
an initial Cartesian grid and the subsequent alignment with the internal boundaries.
This is accomplished through the numerical solution of an elliptic partial 
differential equation based on finite differences.
The large nonlinear system of equation arising from this formulation is solved through
spectral gradient techniques.
Examples of typical structures corresponding to a two-dimensional, areal
hydrocarbon reservoir are presented.
\end{abstract}

\begin{keyword}
Reservoir simulation, grid generation, complex internal boundaries,
finite difference, quadrilateral mesh,
spectral projected gradient methods
\end{keyword}

\end{frontmatter}

\nolinenumbers

\section{Introduction} \label{QGG-intro}

Nowadays the crude oil is not only one of the world's most important combustibles 
but also it becomes almost half of the energy consumption in the world. 
For these reasons, it is important to take care and to correctly administrate the earth's 
existing oil reserves. Also, the ability to predict the performance of a petroleum 
reservoir is of immense importance for the petroleum industry.

A petroleum reservoir is the place where oil stores naturally. Every reservoir is 
unique based on its geological or geophysical characteristics, and the type of crude 
oil the formation contains. Wells are drilled into oil reservoirs to extract the 
crude oil. All this makes the oil extraction and exploitation methods to be very costly.

For obvious reasons it is very useful to predict the behavior of the reservoir before 
and during its exploitation. One would like to be able to know as much as possible 
about production rates and total production resulting from different production strategies.

The petroleum engineer needs to understand at least, the complex reservoir structure and 
the fluid movement through it, to generate an exploitation plan in order to make the right 
predictions of the reservoir production.

At this point, it is where the numerical reservoir simulators play an important role in the reservoir 
exploitation. To this end, numerical reservoir simulation has gained wide acceptance 
as an important decision-making tool, and has become the industry standard for reservoir 
management.

By numerical reservoir simulation we mean the process of inferring the behavior of 
a real reservoir from the performance of a mathematical model of that physical system.
Traditionally, the petroleum engineer represents mathematically an oil reservoir through 
its discrete model using rectangular meshes, in order to perform numerical reservoir 
simulations to predict the oil production in time.
However, the geological structure of the oil reservoir is not adequately represented
using rectangular meshes.
The discrete model of the reservoir must honor the natural boundaries of the reservoir
(see \citep{Evasi_etal2009}).
Given today's reality, the petroleum engineer should align the grid, for instance, 
with the principal directions of deposition and to the preferential flow directions.
A grid (the discrete model) that adequately represents a hydrocarbon reservoir is 
fundamental to be used with any mathematical model that allows to calculate the total 
amount of oil that will ever be recovered according to probable scenarios. 
For example, when or where to drill additional producer wells, injector wells, 
when to shut in wells, to inject water or gas, or to prove other techniques for increasing 
the amount of crude oil that can be extracted from an oil field.

For our purposes, the mathematical model is a set of partial differential equations 
with an appropriate set of boundary conditions, which describes the significant physical 
processes taking place in the system (the reservoir).

The mathematical modeling of the multi-phase fluid flow in a porous media then requires 
grids that represent the complexity of the reservoir structure. On this relay the 
interest of generating certain types of grids that honor the reservoir structure 
through their alignment to a set of internal boundaries.
There are two types of grids, structured or unstructured, that can solve this problem.
A structured mesh can be recognized by all interior nodes of the mesh having an equal 
number of incidence elements. Unstructured mesh generator, on the other hand, relaxes 
the node valence requirement, allowing any number of elements to meet at a single node.

A lot of studies concerning grid generation have been conducted under structured/unstructured 
and/or orthogonal/non-orthogonal grids using finite elements and finite differences. 
Various shaped meshes are widely introduced, such as tetrahedral or hexahedral meshes 
under non-orthogonal or unstructured mesh system, which are very suitable for fitting 
complex geometry.

This work will be confined to structured meshes in two-dimensional domains corresponding 
to areal (2D) or transversal views of oil reservoirs. From this type of grids, 
a three dimensional mesh can be generated: starting from a two dimensional grid on a
reference plane, sweep it through space along a curve between a source and 
target surface.
The source and target surface correspond to layers of the 3D domain 
(the reservoir). It is sometimes referred to as 
2{\small $\frac{1}{2}$}D meshing.

The rectangular meshes are inappropriate to model any type of internal boundaries. 
For example, in case of internal boundaries forming different angles with the 
coordinate axes, a rectangular mesh will give stairs shaped paths to model the boundaries,
which could distort the flow field in the vicinity of that boundary.
That is why the interest of this work will be concentrated on quadrilateral meshes.

In summary, given a rectangular domain, the reservoir, this work is concerned with the 
generation of a 2D non orthogonal structured grid consisting only of quadrilaterals that honor 
the internal boundaries of the domain.

Some publications related to the goal of the present work will be detailed next.

Winslow (1966) in \citep{Winslow1997} proposed a method
to generate a triangular mesh by solving Laplace’s equation.
The method can be derived by formulating the zoning problem as a potential problem, 
with the mesh lines playing the role of equipotentials.
Because of the well-known averaging property of solutions to system of Laplace equations,
we might expect a mesh constructed in this way to be, in some sense, smooth.
Winslow said this methodology is easily adapted to non rectangular boundaries and interfaces.

Amsden and Hirt (1973) in \citep{AmsdenHirt1973}
proposed an iterative process to transform a rectangular grid 
into a more complex configuration, by the direct solution of a Laplace equation type. 
The final grid is made of quadrilateral elements only. Even though the domain does 
not have internal boundaries, the methodology has the potential to handle such type 
of constrains.

Thompson et al. (1974) in \citep{Thompson_etal1974,Thompson_etal1977} extended the works in 
\citep{Winslow1997} and \citep{AmsdenHirt1973} to generate quadrilateral meshes 
as solutions of an elliptic differential system to multiconnected 
regions with any number of arbitrary shape bodies or holes.
Their work was confined to two dimensions in the interest of compute economy,
but all techniques are immediately extendable to three dimensions.

Knupp (1992) in \citep{knupp1992} proposed a variational principle that results in a 
robust elliptic grid generator having many of the strengths of \citep{Winslow1997}
and \citep{AmsdenHirt1973,Thompson_etal1977}. This grid generator places grid 
lines more uniformly over the domain without loss of orthogonality.
Grid quality measures were introduced to quantify differences between discrete grids.
The author states that generalization of the new method to surface and volume
grid generation is straightforward.

Borouchaki and Frey (1996 and 1998) in \citep{Borouchakietal1996,BorouchakiFrey1998}
proposed a method to generate quasi quadrilateral meshes based 
on an automatic triangular to quadrilateral mesh conversion scheme. This method allows 
refining the mesh in order to capture the behavior of the underlying physical phenomenon,
but the resulting mesh is not entirely composed by quadrilaterals.
The main contribution is to extend the triangle merging procedure to the case
where a generalized metric map is specified.
They also introduced a new mesh optimization technique based on vertex smoothing.

Sarrate and Huerta (2000) in \citep{SarrateHuerta2000}
described an algorithm for automatic unstructured quadrilateral 
mesh generation based on a recursive decomposition of the domain into quadrilateral 
elements. Two facts for the generated mesh can be highlighted: (1)
there is no need for a previous step where triangles are generated, and (2)
the generated quadrilateral mesh may have more or less than four elements
meeting at a single node.
The model is applied to complex geometry domains without internal boundaries.

Hyman et al. (2000) in \citep{Hyman2000}
presented a numerical algorithm that aligns a structured quadrilateral grid with 
internal alignment curves. These curves represent internal boundaries that can be 
used to delineate internal interfaces, discontinuities in material properties, 
internal boundaries, or major features of a flow field. The authors use
Gauss-Seidel iterations to solve the Thompson, Thames and Mastin (TTM) smoothing equations
(see \citep{Thompson_etal1974,Thompson_etal1977}).
The smoothing regularizes the distribution of the grid points, is guaranteed to converge,
and eliminates overlapping grid cells.
The Gauss-Seidel iterations are halted when the residual of the TTM equations falls below
a small tolerance.
For complex internal boundaries the authors admit that is extremely difficult to automate 
the procedure.

Sarrate and Huerta (2002) in \citep{SarrateHuerta2002}
presented the extension of the unstructurated and 
quadrilateral grid generation algorithm in \citep{SarrateHuerta2000} 
to three dimensional parametric surfaces.
The target of this extension is to build the discretization
in the plane of parameters and then map the obtained mesh on the surface
according to its geometric properties.

Kyu-Yeul et al. (2003) in \citep{Kyu-Yeul_etal2003}
proposed an algorithm, based on the constrained Delaunay triangulation and 
Q-Morph algorithm, to automatically generate a 2D unstructured quadrilateral mesh 
which handles line constraints. The triangulation method is the one who handle line constraints.
Q-Morph utilizes an advancing front approach to combine triangles into quadrilaterals. 
Kyu-Yeul et al. implemented the constrained Laplacian smoothing method to improve mesh 
quality. This methodology does not involve the resolution of any nonlinear system.


Lin et al. (2007) in \citep{Lin_etal2007}
presented a B\'ezier patch mapping algorithm, based on a bijective boundary-conforming
mapping method, which generates a strictly non-self-overlapping structured  
quadrilateral grid in a given four-sized planar region, whose boundaries are polynomial curves.
Finally, a constrained optimization problem is formualted in order to ensure the bijectiveness of
B\'ezier patch mapping. This problem is solved using the Matlab optimization library.
This methodology does not handle any type of domain internal boundaries.

Parka et al. (2007) in \citep{Parka_etal2007}
developed an automated method of quadrilateral mesh with random line constraints. The algorithm
is based on advanced front techniques and a direct method to handle line-typed features automatically
without any user interactions and modification. The generated mesh does not require a previous
triangular mesh, and in general it is not structured.

Villamizar et al. (2007 and 2009) in \citep{Villamizar_etal2007,Villamizar_etal2009}
proposed a 2D elliptic grid generator to create a structured quadrilateral smooth mesh with 
boundary conforming coordinates and grid lines control, 
on multiply connected regions including boundary singularities.
The generator is based on numerical solution of a Poisson equations system
and the grid line spacing is controlled by a specific nodal distribution on appropriated 
boundary curves. This technique does not take into account any type of internal boundaries.

Berndt et al. (2008) in \citep{Berndt_etal2008} presented two Jacobian-Free Newtown-Krylov (JFNK) 
solvers for Laplace-Beltrami grid generation system of equations.
The two JFNK solvers differ only in the preconditioner.
A key feature of these methods is that the Jacobian is not formed explicitly.

Ruiz-Giron\'es and Sarrate (2008 and 2010) in \citep{RuizGironesetal2008,RuizGironesetal2010} 
proposed a modification of the submapping method to generate structured quadrilateral meshes, 
in order to be applied to geometries in which the angle between two consecutive 
edges of its boundary is not an integer multiple of $\pi/2$.
The submapping method splits the geometry into pieces logically equivalent to a quadrilateral,
and then, meshes each piece keeping the mesh compatibility between them by solving an integer 
linear problem. They used the transfinite interpolation method (TFI) to mesh each patch.
In addition, the authors proposed a procedure to apply it to multiply connected domains.
Also, they present several numerical examples that show the applicability of the 
developed algorithms.

Khattri (2009) in \citep{Khattri2009} presented the elliptic grid generation 
system for generating adaptive quadrilateral
meshes, and its implementation in the C++ language.
The coupled elliptic system are linearised by the method of finite differences, 
and the resulting system is solved by the SOR relaxation.
The presented method generates adaptive meshes without destroying the structured
nature of the mesh.
This technique does not take into account any type of internal boundaries.

Evasi-Yadecuri and Mahani (2009) in \citep{Evasi_etal2009} presented
a novel unstructured (coarse) grid generation approach using structured background grid.
A structured/cartesian fine grid distribution of properties, either static or dynamic, 
is utilized to create background grid or spacing parameter map.
Once background grid is generated, advancing front triangulation and 
then Delaunay tessellation are invoked to form the final (coarse) gridblocks.
This technique does not take into account any type of internal boundaries.

Liu et al. (2011) in \citep{liuetal2011} presented an indirect approach for automatic 
generation of unstructured quadrilateral mesh with arbitrary line constraints.
The methodology follows the steps: (1) discretizing the constrained lines within 
the domain; (2) converting the above domain to a triangular mesh together with the 
line constraints; (3) transforming the generated triangular mesh with line constraints 
to an all-quad mesh through performing an advancing front algorithm from the line 
constraints, which enables the  construction of quadrilaterals layer by layer, and
roughly keeps the feature of the initial triangular mesh; (4) optimizing the topology 
of the quadrilateral mesh to reduce the number of irregular nodes; (5) smoothing the
generated mesh toward high-quality all-quad mesh generation.

Rathod et al. (2014) in \citep{rathodetal2014} described a scheme for unstructured 
quadrilateral mesh generation of a convex, non-convex polygon and multiple 
connected linear polygon. They decompose these polygons into simple sub regions 
in the shape of triangles. These simple regions are then triangulated to generate 
a fine mesh of triangular elements. Finaly, the authors proposed an automatic 
triangular to quadrilateral conversion scheme. Although the paper describes 
the scheme as applied to planar domains, the author states that it could be 
extended to 3D.

Fortunato et al. (2016) in \citep{fortunatoetal2016} proposed the generation of 
unstructured high-order meshes by solving the classical Winslow equations.
They described a new continuous Galerkin finite element formulation 
of the standard Winslow equations, which they use for generation of well-shaped high-order 
unstructured curved meshes. Compared to other finite element formulations in the literature, 
their discretization attempts to directly mimic the non-conservative form used by most 
finite difference solvers, which allows for a highly efficient Picard solver.

The methodology proposed in the present work
follows the technique proposed by Hyman et al. (2000) in \citep{Hyman2000},
also see \citep{Borregales_etal2009} and \citep{Valido_etal2012},
to generate grids that represent the complexity of the reservoir structure,
because it does not require a previous triangular mesh, the resulting quadrilateral mesh is well
adapted to internal boundaries and is structured and non necessarily orthogonal. 
The generation of such grid is accomplished through the numerical solution of 
a system of elliptic partial differential equation (see \citep{Thompson_etal1985} and 
\citep{Khattri2009}) based on finite differences.
The large nonlinear system arising from this discretization are solved using spectral 
gradient techniques (see \citep{LaCruzRaydan2003}), which are low in storage and 
low in computational cost.
Buitrago et al. (2015) in \citep{Buitragoetal2015}, proposed a numerical model 
based on finite volume methods for the solution of the 2D convection 
diffusion equation on the type of meshes presented in this work, i.e.
non-rectangular grids formed only by quadrilaterals
honouring the internal structures of the domain.

\section{Formulation of the problem} \label{QGG-formulacion}

This work concerns with the following problem.

Given a two-dimensional domain $\Omega$ whose external boundary is a closed polygonal line with 
internal boundaries defined also by polygonal lines, it is required to generate a grid 
consisting only of quadrilaterals with the following features:

\begin{enumerate}
\item conformal, that is, to be a tessellation of the two-dimensional domain $\Omega$ 
such that the intersection of any two quadrilaterals is a vertex, an edge or empty 
(never a portion of one edge),
\item structured, but non cartesian, this means that the quadrilaterals that 
make up the grid need not to be rectangular and only four quadrilaterals meet 
at a single node, and
\item the mesh generated must be supported on the internal boundaries.
\end{enumerate}

It is also considered the possibility that interior points in $\Omega$ be vertices 
of the resulting grid. These points will represent wells on the reservoir to simulate. 
Internal boundaries, that may have a complex layout and configuration, mimic the 
structure of the reservoir.

The fundamental technique for generating such type of grids, is the deformation 
of an initial Cartesian grid and the subsequent alignment with the internal boundaries. 
This is accomplished through the numerical solution of an elliptic partial differential 
equation based on finite differences.

The internal boundaries must be the representation of the geometric structures 
of the reservoir. Examples of these are boundaries between soil types, preferential 
fluid channels, zones with different permeability, system of fractures, etc.

\bigskip
\noindent
{\bf Modelling the internal boundaries.}

Let's define $\Omega = [a,b] \times [c,d] \subset \mathbb{R}^{2}$.
Internal boundaries of $\Omega$ will be modeled using four types of polygonal lines.

\begin{Definition} \label{IAC}
Let's define an IAC (Internal Alignment Curves) as a polygonal line contructed 
with the concatenation of line segments that do not touch the external boundary 
of $\Omega$ and do not intersect each other.
\end{Definition}

\begin{Definition} \label{SIAC}
Let's define a SIAC (Spanned IAC) as a polygonal line generated by the concatenation of
line segments, such that this goes horizontal or vertical from one side of $\Omega$ 
to the other, but never from a horizontal side to a vertical one or vice
versa. In the last case, it is said that the SIAC is mal formed. The SIAC
breakpoints and the ends, as well as the possible intersection points between
SIAC, will be called vertices.
\end{Definition}

The IAC are extended to the external boundaries to form horizontal or vertical SIAC.

Given a horizontal or a vertical SIAC, going from one end to the other, either 
from left to right or from bottom to top, the vertices are enumerated to produce 
a sequence $v_1, \dots, v_k$. 

\begin{Definition} \label{SIACgrowthproperty}
It is said that a SIAC is growing if for each 
pair of vertices $v_i$ and $v_j$ holds
\begin{itemize}
\item If $i$ is less than $j$ and it is a horizontal SIAC, then the abscissa 
of $v_i$ is less than the abscissa of $v_j$.
\item If $i$ is less than $j$ and it is a vertical SIAC, then the ordinate 
of $v_i$ is less than the ordinate of $v_j$.
\end{itemize}
\end{Definition}

In this work, all growing SIAC will be grouped in horizontal or vertical SIAC.
From now on, when we mention a SIAC it will refer to a growing SIAC.

\begin{Remark}
In this work we assume that all growing SIAC satisfy the following features:
\begin{itemize}
\item they should always be growing in some direction, either horizontally or 
vertically,
\item any two horizontal SIAC never intersect each other, 
 the same holds for any two vertical SIAC, and
\item a horizontal SIAC intersects with a vertical one at some point in $\Omega$, 
 and only one.
\end{itemize}
\end{Remark}

\begin{Definition} \label{QIAC}
Let's define a QIAC (Quadrilateral IAC) as a convex closed four sided 
polygonal line that does not touch the
external boundary of $\Omega$. Each QIAC has a minimal rectangle; it is the
smallest rectangle with edges parallel to the coordinate axes, which contains the
QIAC. Similarly, a subdomain of a QIAC is any rectangle, with edges parallel
to the coordinate axes, which properly contains the QIAC minimal rectangle.
\end{Definition}

Fig.\ref{internal_boundaries} presents different types of polygonal lines (SIAC and QIAC).

\begin{Definition} \label{associatedQIAC}
Given two QIAC, it is said that they are associated either when the minimal 
rectangles to each QIAC intersect in at least a point, or when 
the minimal rectangle of one QIAC intersects the other QIAC 
(see an example of associated QIAC in Fig.\ref{genIQIAC}). 
\end{Definition}

\begin{Definition} \label{IQIAC}
Let's define an IQIAC as a  set of associated QIAC, 
that have been concatenated through artificial vertices and line segments, 
till the new structure constitutes a conformal partition of convex quadrilaterals
(see an example in Fig.\ref{genIQIAC}).
\end{Definition}

The minimal rectangle and subdomain definitions given for a QIAC could be 
extended similarly to the case of an IQIAC.

\begin{Remark} \label{QIAC-2vertices}
QIAC and IQIAC must have the following properties:
\begin{itemize}
\item A QIAC has two vertices on the horizontal lines and two 
vertices on the vertical lines. 
\item For an IQIAC, its horizontal lines have the same amount of vertices, 
and the same thing is true for all its vertical lines
(see the IQIAC in Fig.\ref{genIQIAC}).
\end{itemize}
\end{Remark}


All vertical and horizontal lines of either a QIAC or an IQIAC will be 
extended to the external boundaries to form
SIAC, in a similar way as an IAC.

\begin{Lemma} \label{QIAC-SIAC-vertices}
Given a QIAC or an IQIAC, the amount of vertical SIAC generated 
coincides with the amount of vertices on the horizontal 
lines of the QIAC or IQIAC. 
It occurs similarly with the horizontal SIAC generated.
For example see the image on the left in Fig.\ref{Fig1}).
\end{Lemma}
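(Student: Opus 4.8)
The plan is to reduce the statement to the grid-like combinatorial structure that a QIAC (and hence an IQIAC) carries, and then exhibit an explicit bijection. First I would make precise what the \emph{horizontal lines} and \emph{vertical lines} of the figure are. By Definition~\ref{QIAC} a QIAC is a single convex quadrilateral, which by Remark~\ref{QIAC-2vertices} consists of two horizontal lines and two vertical lines, each carrying exactly two vertices; an IQIAC is, by Definition~\ref{IQIAC}, a conformal partition into convex quadrilaterals, so by Remark~\ref{QIAC-2vertices} all of its horizontal lines carry the same number $p$ of vertices and all of its vertical lines carry the same number $q$ of vertices. Denote by $h$ and $v$ the number of horizontal and vertical lines, respectively.

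Next I would invoke the extension rule stated just before the Lemma: each vertical line of the QIAC/IQIAC is prolonged to the external boundary to form one vertical SIAC, and by the Remark following Definition~\ref{SIACgrowthproperty} two vertical SIAC never intersect, so distinct vertical lines produce distinct SIAC. Consequently the number of vertical SIAC generated equals $v$, the number of vertical lines, and the assertion reduces to proving $p=v$, i.e. that the number of vertices lying on any single horizontal line equals the number of vertical lines.

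The core of the argument is the bijection. I would fix an arbitrary horizontal line $H$ and send each of its vertices to the vertical line passing through it. Because the vertices are exactly the breakpoints, endpoints and crossing points of the lines (Definition~\ref{SIAC}), every vertex of $H$ lies on some vertical line; and because the partition is conformal and each cell is a convex quadrilateral, each vertical line crosses $H$ in exactly one vertex. This makes the map a bijection from the vertices of $H$ onto the set of vertical lines, giving $p=v$. The base case of a single QIAC is immediate from Remark~\ref{QIAC-2vertices}: two vertices on its horizontal line and two vertical lines, each yielding one vertical SIAC. Exchanging the roles of ``horizontal'' and ``vertical'' throughout yields the companion statement $q=h$ for horizontal SIAC.

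The step I expect to be the main obstacle is verifying that this map is genuinely a bijection for a general IQIAC, specifically that every vertical line meets the fixed horizontal line $H$ exactly once, with no line skipped and no double crossing. Here I would lean hardest on the conformal-partition hypothesis of Definition~\ref{IQIAC} together with the uniform vertex-count property of Remark~\ref{QIAC-2vertices}: these encode that the structure is combinatorially a rectangular grid, so a double-counting identity for the total number of vertices, namely $h\,p = v\,q$, combined with the per-line regularity forces $p=v$ and $q=h$ even when the cells are strongly deformed.
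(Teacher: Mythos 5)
Your argument is correct and is essentially the paper's own proof written out in full: the paper disposes of this lemma in a single line, asserting that it follows straightforwardly from Definition~\ref{QIAC}, Definition~\ref{IQIAC} and Remark~\ref{QIAC-2vertices}, and the content being gestured at is exactly your construction --- each vertical line of the QIAC/IQIAC extends to one vertical SIAC, distinct vertical lines give distinct SIAC, and the vertices of any fixed horizontal line are in one-to-one correspondence with the vertical lines crossing it. You have supplied the detail the authors omitted, using precisely the ingredients they cite.

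One blemish is worth fixing, because you present it as the resolution of what you call the main obstacle: the closing double-counting step is a non sequitur. The identity $h\,p = v\,q$ together with per-line regularity does \emph{not} force $p=v$ and $q=h$; arithmetically, $h=2$, $p=6$, $v=3$, $q=4$ satisfies $h\,p=v\,q$ with $p\ne v$, so the counting argument cannot by itself rule out such a configuration. What actually closes the argument is the premise you state immediately before it --- that Definition~\ref{IQIAC} (a conformal partition into convex quadrilaterals obtained by concatenating associated QIAC) together with the uniform vertex counts of Remark~\ref{QIAC-2vertices} makes the IQIAC combinatorially a rectangular grid, so that every vertical line meets every horizontal line in exactly one vertex and no vertex of a horizontal line is skipped. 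Once that is granted, the bijection of your third paragraph yields $p=v$ (and symmetrically $q=h$) directly, and the counting identity is both unnecessary and insufficient. Note that the paper never proves this rectangular-grid structure either; it is implicitly built into the construction of the IQIAC and into Remark~\ref{QIAC-2vertices}, so leaning on it as you do is exactly as rigorous as the original one-line proof.
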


\begin{proof}
it follows straightforward from definitions \ref{QIAC} of QIAC and \ref{IQIAC} of IQIAC, 
and Remark~\ref{QIAC-2vertices}.
\end{proof}

\begin{figure}[ht]
\noindent \begin{centering}
{\includegraphics[width=2.1in,height=2.1in]{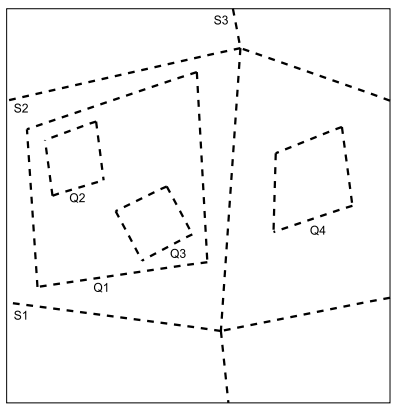}}
\par\end{centering}

\noindent \centering{}\caption{Some examples of internal boundaries:
SIAC (S1, S2 and S3) and QIAC (Q1, Q2, Q3 and Q4). The interior of QIAC Q1 
contains the QIAC Q2 and Q3.}
\label{internal_boundaries}
\end{figure}

\begin{figure}[ht]
\noindent \begin{centering}
{\includegraphics[width=3in,height=1.4in]{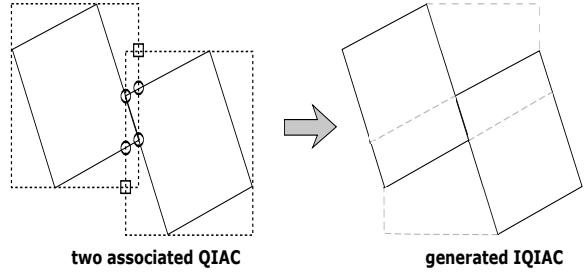}}
\par\end{centering}

\noindent \centering{}\caption{On the left: two associated QIAC, dotted rectangles
are the QIAC minimal rectangles, points marked with squares are minimal 
rectangles intersections, and points marked with circles are intersections of minimal rectangles
and internal boundaries. On the right: generated IQIAC.}
\label{genIQIAC}
\end{figure}

The fact that QIAC and IQIAC could be thought as a set of SIAC, allows 
using a unified methodology for the treatment of SIAC. This is not the 
case for original SIAC or those generated from IAC.

\section{Methodology proposed for the generation of the 
  structured quadrilateral mesh} \label{QGG-metodologia}

\noindent
The proposed general process for generating the desired mesh, 
follows these steps:
\begin{enumerate}
\item Generating an initial cartesian grid
\item Processing the SIAC generated by the QIAC
\item Processing the SIAC generated by the IQIAC
\item Processing the IAC
\item Either processing the original SIAC or applying a global 
smoothing in case there are no original SIAC
\end{enumerate}

Each of these steps is detailed below.

\bigskip
\noindent
{\bf Initial cartesian grid.}

The process starts with the generation of a standard cartesian grid with constant spacing
in each direction. For each direction the spacing is a fraction of the minimal distance between 
the intersection points of the SIAC with the external boundaries, as shown, for example, in 
Fig.\ref{Fig1}.
We denote by $m$ and $n$ the numbers of partition points in each direction. Thus, the initial
cartesian grid has $mn$ nodes.

\begin{figure}[ht]
\noindent \begin{centering}
{\includegraphics[width=2.1in,height=2.1in]{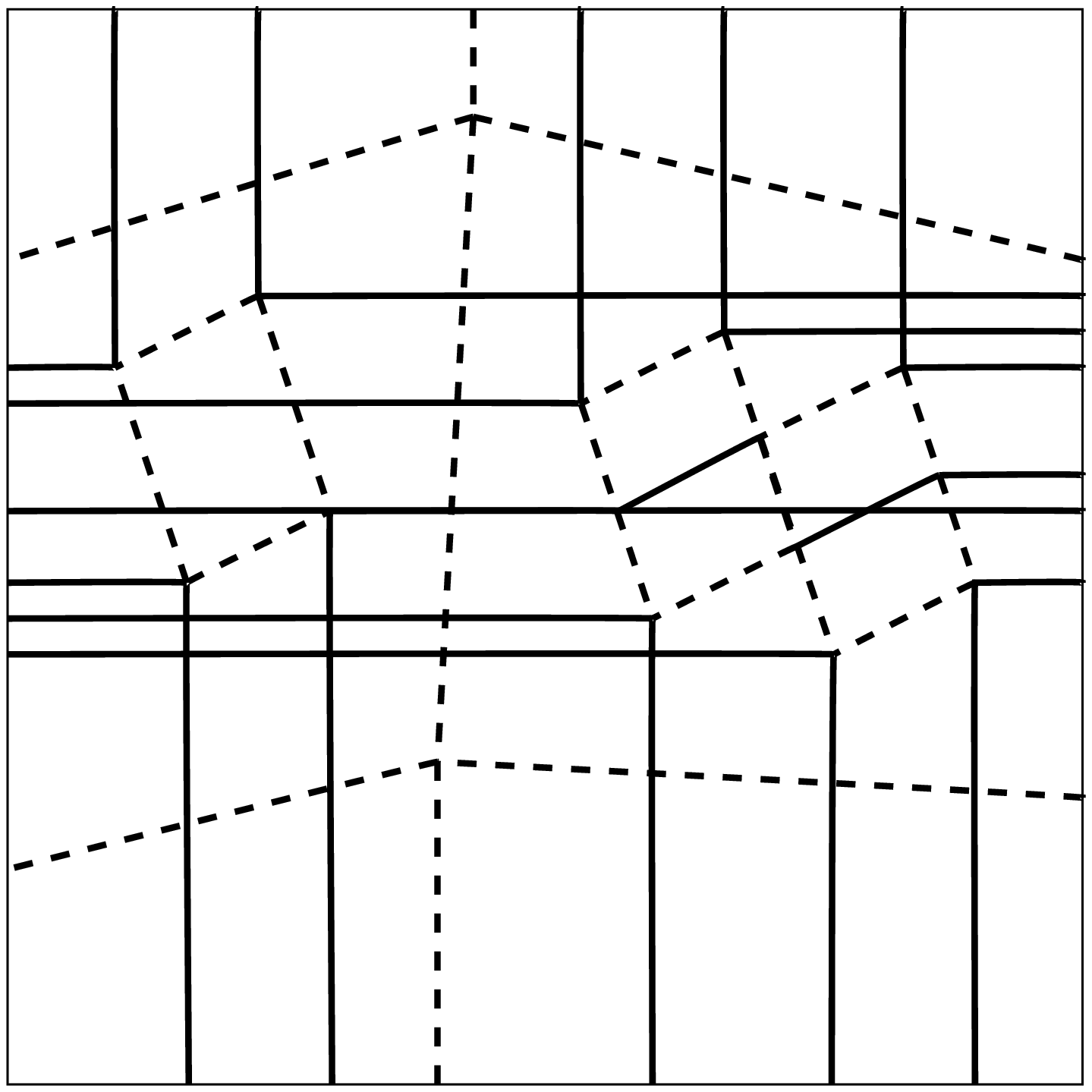}
\includegraphics[width=2.1in,height=2.1in]{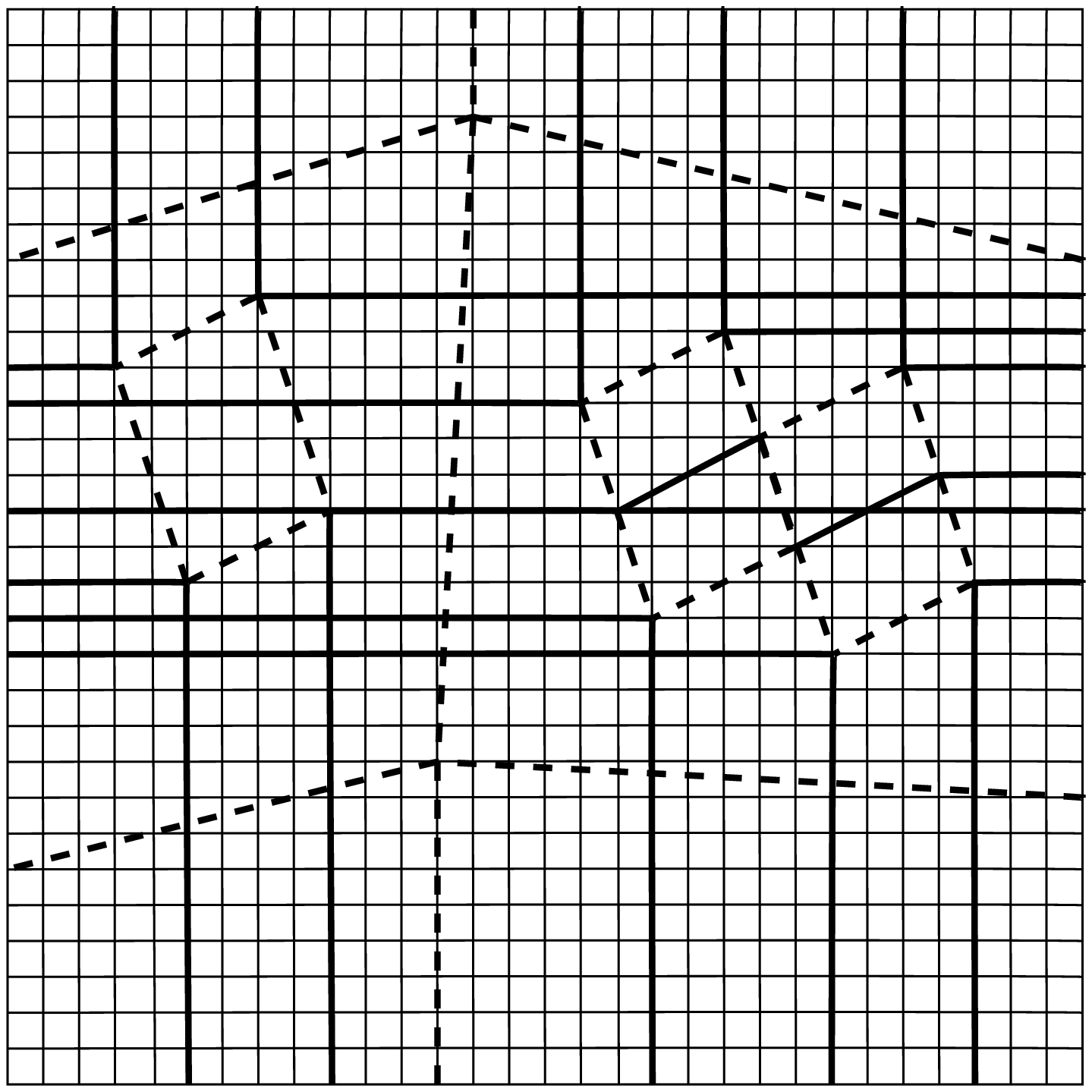}}
\par\end{centering}

\noindent \centering{}\caption{On the left: internal boundaries identified with dotted lines,
three SIAC, one QIAC and one IQIAC. All vertical and horizontal lines of either a QIAC or 
an IQIAC are extended to the external boundaries to form SIAC.
On the right: Example of the initial cartesian grid based on the internal boundaries.}
\label{Fig1}
\end{figure}

\bigskip
\noindent
{\bf Processing the SIAC and IAC.}

As it was explained in the last section, the QIAC and IQIAC are decomposed into a set of SIAC,
while the IAC are extended to the external boundaries to form SIAC. Consequently, the processing of
all these internal boundaries is reduced to the treatment of a set of SIAC. The aim of the SIAC 
treatment is to adjust the nodes of the initial cartesian grid to this set of SIAC. 

This procedure consists of three essential steps:
\begin{itemize}
\item association of lines,
\item redistribution of nodes on associated lines, and
\item labelling of fixed nodes
\end{itemize}

\medskip
\noindent
{\it Association of lines.}

Given, for example, a horizontal SIAC, the idea is to associate a line in the cartesian grid that 
best adjust to that SIAC.
To achive this, calculate the mean of the vertices ordinates of the SIAC and then choose 
a line of the cartesian grid whose ordinate is the closest to that mean.
In the case of a vertical SIAC, proceeds in similar way. See the top-left image in Fig.\ref{Fig2}.

\begin{figure}[ht]
\begin{centering}
{\includegraphics[width=2.2in,height=2.2in]{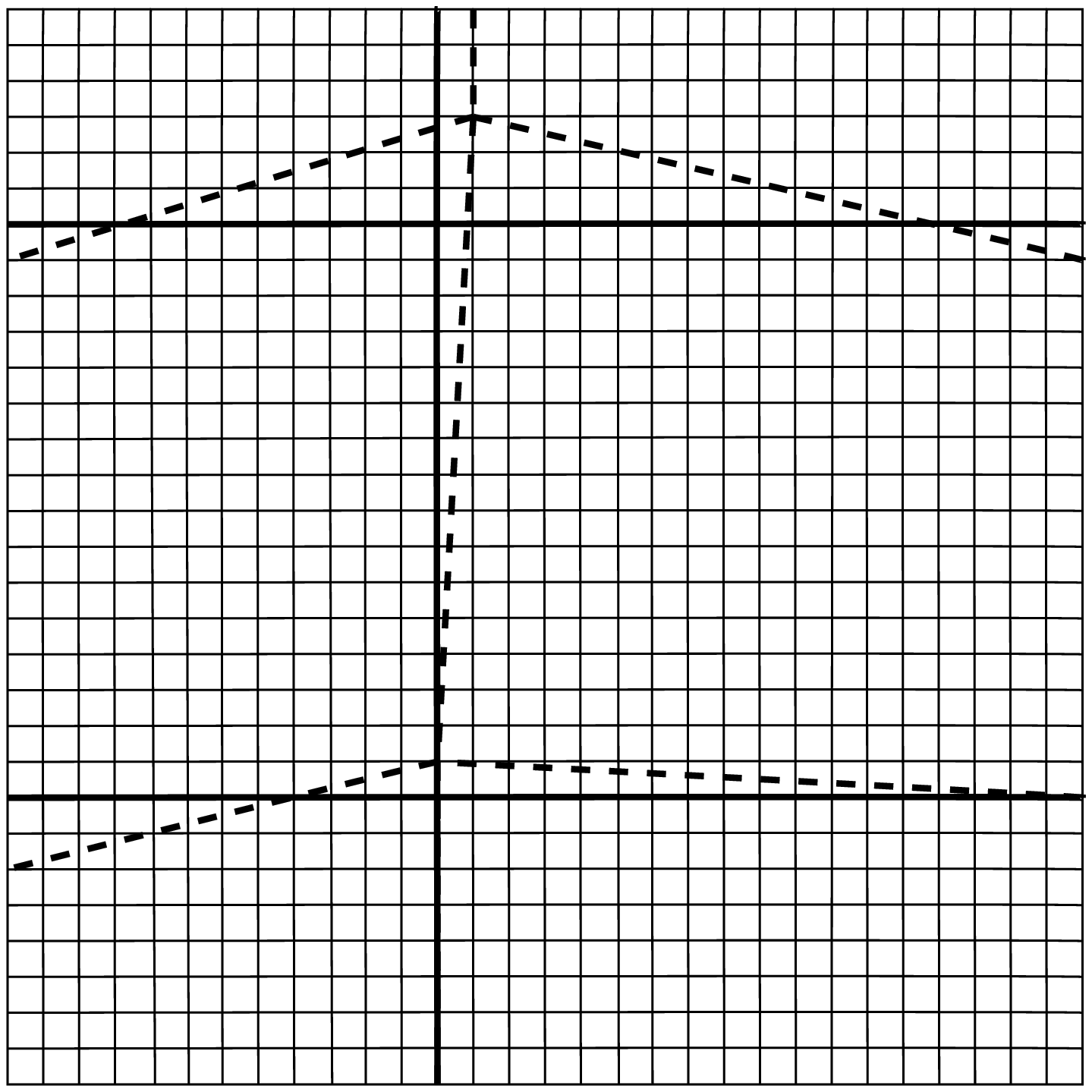}
\includegraphics[width=2.2in,height=2.2in]{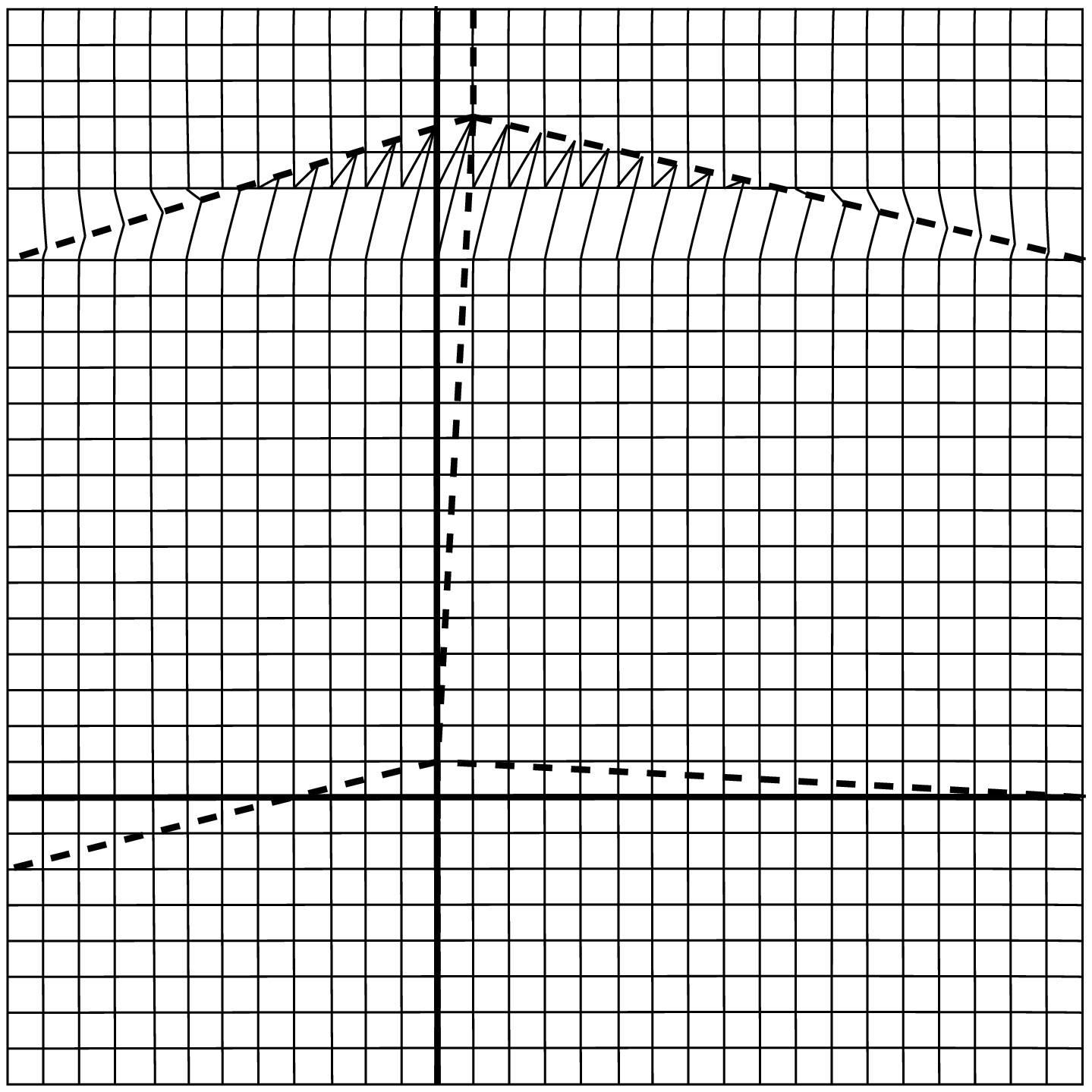}
\includegraphics[width=2.2in,height=2.2in]{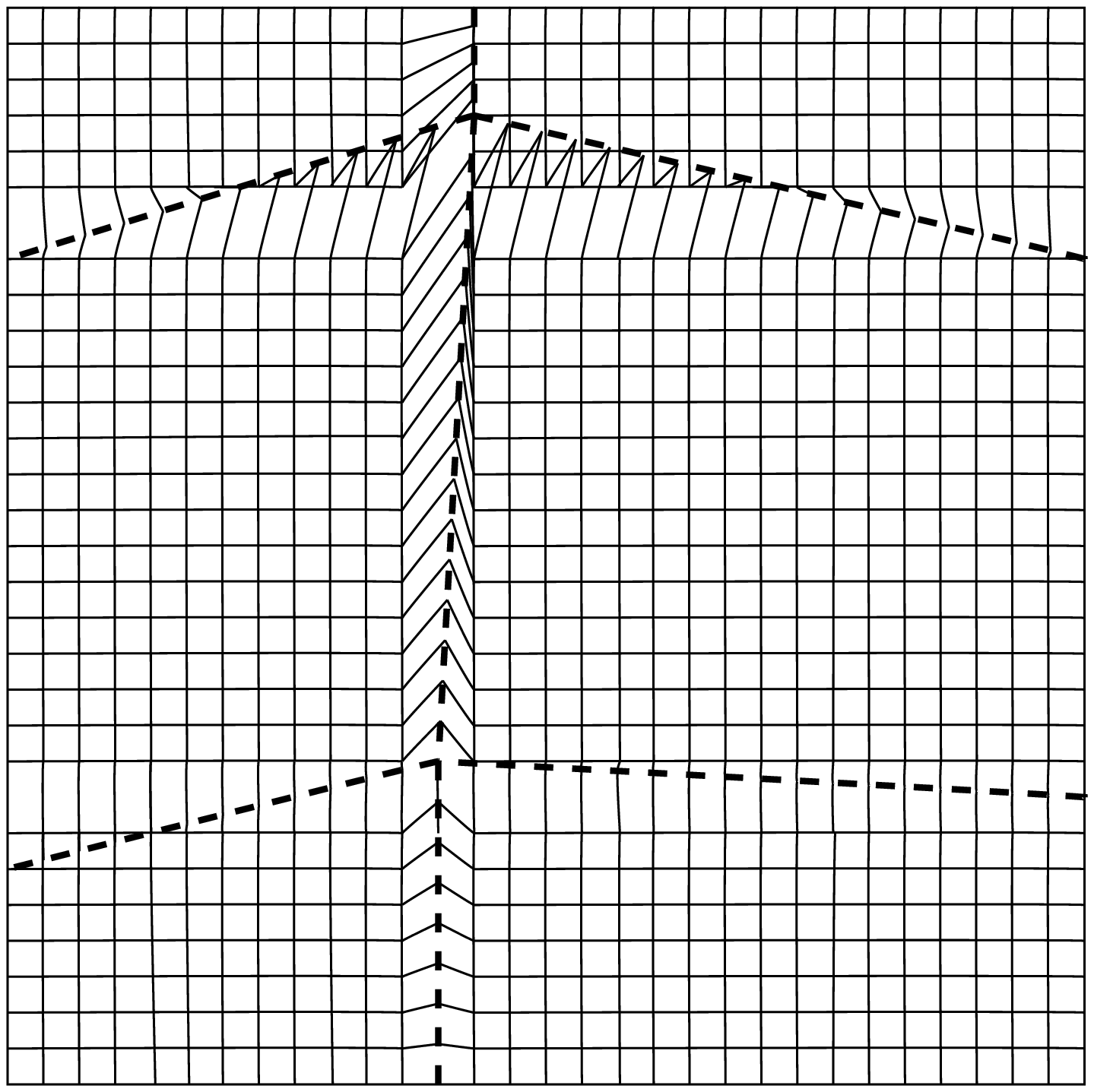}
\includegraphics[width=2.2in,height=2.2in]{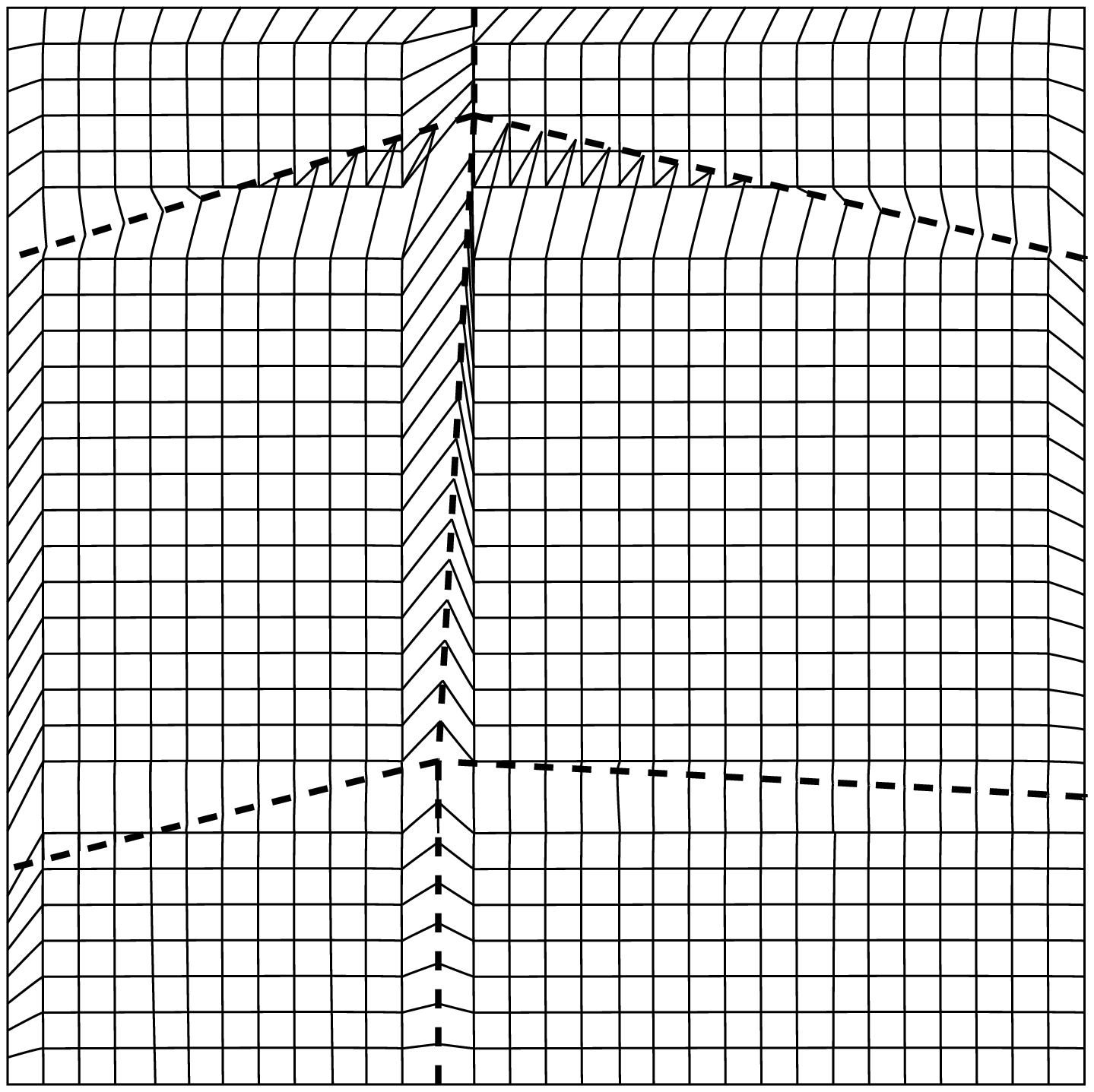}}
\par\end{centering}
\caption{Gray lines conform the initial catersian grid, dotted lines are internal boundaries (SIAC). 
 Top-left: thick lines of the mesh represent associated lines to each SIAC.
 Top-right: mapping the mesh nodes of the upper horizontal associated line on the SIAC.
 Bottom-left: overlap mesh generated from the three SIAC.
 Bottom-right: overlap mesh generated from the three SIAC and the external boundaries.}
\label{Fig2}
\end{figure}

\medskip
\noindent
{\it Redistribution of nodes on the associated line.}

\noindent
First, the end vertices of the SIAC are associated to the end points of the associated line.
Second, each internal vertex of the SIAC will be assigned to the closest internal node of the 
associated line. Therefore, a partition is created on each associated line, 
based on the vertices of the SIAC.
Finally, for each SIAC and associated line, one proceeds as follows:
the nodes for each pair of consecutive internal vertices on the associated line, are
distributed in a proportional way over the corresponding segment of the SIAC, 
see top-right and bottom-left images in Fig.\ref{Fig2}.

It is important to point out that a vertex corresponding to the intersection of two SIAC, 
is always associated to the intersection of the two associated lines.

A similar redistribution procedure applies to the external boundary nodes.

Therefore an overlap mesh, coming from the deformation of the initial cartesian grid, is obtained
(see bottom-right image in Fig.\ref{Fig2}).

\medskip
\noindent
{\it Labelling of fixed nodes.}

Some of the nodes of the overlap grid must be labeled, because they will be fixed during 
the smoothing process. The fixed nodes are either the ones coming from the
original internal boundaries (IAC, QIAC, IQIAC and original SIAC) or those belonging to the 
external boundary. This labelling information will play an important role in the resolution of
the non linear system arising from the discretization of the elliptic partial differential
equation in the smoothing process.

\bigskip
\noindent
{\bf Smoothing process.}

This process seeks to relocate in a smooth manner the nodes belonging to the overlapping 
mesh in order to get the quadrilateral mesh, structured, conformal and tailored to the 
internal boundaries of $\Omega$.

The transformation of the physical plane $(x,y)$ (overlapping mesh) to the 
transformed plane $(\xi,\eta)$ (smooth mesh) is given by the vector function 
$f(x,y)=(\xi,\eta)$, with $\xi=\xi(x,y)$, $\eta=\eta(x,y)$.
Similarly, the inverse transformation is given by the functions
$x=x(\xi,\eta)$, $y =y(\xi,\eta)$. The Jacobian matrix of the inverse transformation
is given by
$$\frac{\partial(x,y)}{\partial(\xi,\eta)} =
\begin{pmatrix} x_{\xi}   & x_{\eta}  \\ y_{\xi}  & y_{\eta} \end{pmatrix},$$

\noindent
and $J = \det \frac{\partial(x,y)}{\partial(\xi,\eta)} = x_{\xi} y_{\eta} - x_{\eta} y_{\xi}$,
where it is imposed that $x_{\xi} y_{\eta} - x_{\eta} y_{\xi} \ne 0.$

The smooth mesh satisfies, see 
\citep{Winslow1997,Thompson_etal1974,Thompson_etal1977,Thompson_etal1985},
the standard system of elliptic equations
\begin{equation}
\begin{array}{l}
\label{eq:EllipticEq}
\Delta \xi = \xi_{xx} + \xi_{yy} = 0, \\
\Delta \eta = \eta_{xx} + \eta_{yy} = 0,
\end{array}
\end{equation}

\noindent
with Dirichlet type boundary conditions, corresponding to the positions of fixed nodes on the 
internal and external boundaries.

The tangent vectors at a point $P$ are defined by
$$e_1 = (\frac{\partial x}{\partial \xi}, \frac{\partial y}{\partial \xi})^t \ \ {\rm and} \ \
e_2 = (\frac{\partial x}{\partial \eta}, \frac{\partial y}{\partial \eta})^t \ .$$
These base vectors are called covariant base vectors.

A second set of vectors, $e^1$ and $e^2$, is defined by 
$e^i \cdot e_j = \delta_j^i$, $i,j=1,2$, with $\delta$ the Kronecker delta.
The $e^i$ are called contravariant base vectors and are orthogonal to
the respective covariant vector for $i \ne j$.

A vector $v$ can either be represented in the covariant or contravariant base vectors as
$$v = \sum_i v^i e_i = \sum_j v_j e^j \ ,$$
\noindent
where $v^i$ and $v_j$ are their components respectively.

Let's define the matrix ($g_{ij}$) as $g_{ij} = e_i \cdot e_j$ for $i,j=1,2$, 
then $g = \det (g_{ij}) = g_{11} g_{22} - g_{12}^2$.
It follows,
\begin{equation}
\label{eq:matrix_g}
\begin{array}{l}
g_{11} = (x_\xi)^2 + (y_\xi)^2, \ \ g_{12} = g_{21} = x_\xi x_\eta + y_\xi y_\eta,
\ \ g_{22} = (x_\eta)^2 + (y_\eta)^2 \ , \\
{\rm and} \ \ \ g = (x_{\xi} y_{\eta} - x_{\eta} y_{\xi})^2 = J^2 \ .
\end{array}
\end{equation}

The components of the inverse matrix ($g^{ik}$) of $(g_{ik})$ are found from 
$$\sum_{i=1}^2 g_{ij} g^{ik} = \delta_j^k \ .$$
\noindent
Therefore, after some calculations,
\begin{equation}
\label{eq:inv-matrix_g}
g^{11} = g^{-1} g_{22}, \ \ \ g^{12} = g^{21} = -g^{-1} g_{21}, \ \ \
g^{22} = g^{-1} g_{11} \ .
\end{equation}

Some of the lemmas and theorems~\ref{EllipticInvertedEq} and~\ref{DiscreteEllipticInvertedEq} 
included in this subsection are for the sake of completeness 
(see~\citep{Knupp_etal1993}).

\begin{Lemma} \label{JacMatrix_f}  
The Jacobian matrix of the transformation $f$ is
\begin{equation}
\frac{\partial(\xi,\eta)}{\partial(x,y)} =
\begin{pmatrix} \xi_{x}   & \xi_{y}  \\ \eta_{x}  & \eta_{y} \end{pmatrix} =
J^{-1} \begin{pmatrix} y_{\eta}   & -x_{\eta}  \\ -y_{\xi}  & x_{\xi} \end{pmatrix},
\end{equation}
with $J = x_{\xi} y_{\eta} - y_{\xi} x_{\eta}$.
\end{Lemma}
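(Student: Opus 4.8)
The plan is to read this statement as a direct consequence of the inverse function theorem. Since $f$ and the inverse transformation are mutual inverses, the composition $x = x(\xi(x,y),\eta(x,y))$, $y = y(\xi(x,y),\eta(x,y))$ is the identity on $\Omega$. Differentiating each component with respect to $x$ and to $y$ by the chain rule produces four scalar identities that assemble into the single matrix equation
\begin{equation*}
\frac{\partial(x,y)}{\partial(\xi,\eta)}\,\frac{\partial(\xi,\eta)}{\partial(x,y)} = I,
\end{equation*}
so the two Jacobian matrices are mutual inverses.

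First I would record the hypothesis $J = x_\xi y_\eta - x_\eta y_\xi \ne 0$ imposed in the setup, which guarantees that $\frac{\partial(x,y)}{\partial(\xi,\eta)}$ is invertible. Then the Jacobian of $f$ is obtained simply by inverting a $2\times 2$ matrix: for $\begin{pmatrix} a & b \\ c & d\end{pmatrix}$ with $ad-bc\ne 0$ the inverse is $(ad-bc)^{-1}\begin{pmatrix} d & -b \\ -c & a\end{pmatrix}$. Substituting $a=x_\xi$, $b=x_\eta$, $c=y_\xi$, $d=y_\eta$, so that $ad-bc = J$, yields
\begin{equation*}
\begin{pmatrix} \xi_x & \xi_y \\ \eta_x & \eta_y\end{pmatrix} = J^{-1}\begin{pmatrix} y_\eta & -x_\eta \\ -y_\xi & x_\xi\end{pmatrix},
\end{equation*}
which is exactly the claimed identity.

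There is no genuine obstacle here; the statement is elementary and is included for completeness, as the text itself notes. The only points requiring care are purely bookkeeping: confirming that the subscript convention (e.g.\ $\xi_x = \partial\xi/\partial x$) is used consistently so that the rows and columns line up correctly, and verifying that the determinant $ad-bc$ of the inverse-transformation Jacobian indeed equals the quantity $J$ already defined, so that the scalar prefactor comes out as $J^{-1}$ rather than its reciprocal. Everything then follows from the $2\times 2$ inversion formula.
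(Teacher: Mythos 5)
Your proof is correct and takes essentially the same approach as the paper: both obtain the result by inverting the $2\times 2$ matrix $\frac{\partial(x,y)}{\partial(\xi,\eta)}$, whose determinant is $J \ne 0$, via the standard inversion formula. The only cosmetic difference is that you justify identifying this inverse with the Jacobian of $f$ by the chain rule, whereas the paper makes the same identification by recognizing the rows of the inverse as the contravariant base vectors $e^1$ and $e^2$.
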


\begin{proof}
let $a$, $b$, $c$ and $d$ be the components of the inverse matrix of
$\frac{\partial (x,y)}{\partial (\xi,\eta)}$, i.e.
$$\begin{pmatrix} a & b \\ c & d \end{pmatrix} 
\begin{pmatrix} x_{\xi}   & x_{\eta}  \\ y_{\xi}  & y_{\eta} \end{pmatrix} =
\begin{pmatrix} 1 & 0 \\ 0 & 1 \end{pmatrix}.$$
After some calculations it follows
$$a = J^{-1} y_{\eta}, \ \ b = -J^{-1} x_{\eta}, \ \ c = -J^{-1} y_{\xi}, \ \ d = J^{-1} x_{\xi}.$$
From the definition of $e^i$, i.e. $e^i \cdot e_j = \delta_j^i$, $i,j=1,2$,
it follows 
$$e^1 = (a,b)^t = J^{-1} \begin{pmatrix} y_{\eta}  \\ -x_{\eta} \end{pmatrix} \ \ 
{\rm and} \ \ e^2 = (c,d)^t = \begin{pmatrix} -y_{\xi}  \\ x_{\xi} \end{pmatrix},$$
and finally,
$$\frac{\partial(\xi,\eta)}{\partial(x,y)} =
\begin{pmatrix} \xi_{x}   & \xi_{y}  \\ \eta_{x}  & \eta_{y} \end{pmatrix} =
J^{-1} \begin{pmatrix} y_{\eta}   & -x_{\eta}  \\ -y_{\xi}  & x_{\xi} \end{pmatrix}.$$
\end{proof}

\begin{Theorem} \label{EllipticInvertedEq}
The non-linear inverted differential equations corresponding to the Laplacian equations
(\ref{eq:EllipticEq}), with Dirichlet type boundary conditions, are
\begin{equation}
\label{eq:EllipticInvEq}
\begin{array}{l}
L x \equiv \alpha x_{\xi \xi} -2 \beta x_{\xi \eta} + \gamma x_{\eta \eta} = 0, \\
L y \equiv \alpha y_{\xi \xi} -2 \beta y_{\xi \eta} + \gamma y_{\eta \eta} = 0,
\end{array}
\end{equation}
where
$\alpha = x_{\eta} ^2 + y_{\eta} ^2$, $\beta = x_{\xi} x_{\eta} + y_{\xi} y_{\eta}$ and
$\gamma = x_{\xi} ^2 + y_{\xi} ^2$, and with transformed boundary conditions.
\end{Theorem}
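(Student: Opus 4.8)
The plan is to transform the Laplace equations (\ref{eq:EllipticEq}) from the physical plane to the computational plane by a change of variables, exploiting a convenient identity for the Laplacian of a composite function. First I would record how the operator $\Delta=\partial_{xx}+\partial_{yy}$ acts on an arbitrary smooth function $\phi(\xi,\eta)$ regarded as a function of $(x,y)$ through $\xi=\xi(x,y)$ and $\eta=\eta(x,y)$. Applying the chain rule twice gives
\begin{equation}
\Delta\phi=(\xi_x^2+\xi_y^2)\,\phi_{\xi\xi}+2(\xi_x\eta_x+\xi_y\eta_y)\,\phi_{\xi\eta}+(\eta_x^2+\eta_y^2)\,\phi_{\eta\eta}+(\Delta\xi)\,\phi_\xi+(\Delta\eta)\,\phi_\eta.
\end{equation}
The decisive observation is that the two first-order terms are multiplied precisely by $\Delta\xi$ and $\Delta\eta$, which vanish by hypothesis (\ref{eq:EllipticEq}); the identity therefore collapses to a purely second-order expression.

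The second step is the key trick: apply this reduced identity to the two choices $\phi=x$ and $\phi=y$. Since $x$ and $y$ are the physical coordinates themselves, one has trivially $\Delta x=x_{xx}+x_{yy}=0$ and $\Delta y=0$, so that
\begin{equation}
0=(\xi_x^2+\xi_y^2)\,x_{\xi\xi}+2(\xi_x\eta_x+\xi_y\eta_y)\,x_{\xi\eta}+(\eta_x^2+\eta_y^2)\,x_{\eta\eta},
\end{equation}
together with the analogous relation obtained by replacing $x$ with $y$.

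It then remains to rewrite the three coefficients in terms of derivatives of the inverse map. Using Lemma~\ref{JacMatrix_f}, that is $(\xi_x,\xi_y)=J^{-1}(y_\eta,-x_\eta)$ and $(\eta_x,\eta_y)=J^{-1}(-y_\xi,x_\xi)$, I would compute
\begin{equation}
\xi_x^2+\xi_y^2=J^{-2}\alpha,\qquad \xi_x\eta_x+\xi_y\eta_y=-J^{-2}\beta,\qquad \eta_x^2+\eta_y^2=J^{-2}\gamma,
\end{equation}
where $\alpha=x_\eta^2+y_\eta^2$, $\beta=x_\xi x_\eta+y_\xi y_\eta$ and $\gamma=x_\xi^2+y_\xi^2$ are exactly the quantities $g_{22}$, $g_{12}$ and $g_{11}$ appearing in (\ref{eq:matrix_g}). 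Substituting these and factoring out the common $J^{-2}$, which is nonzero because $J\neq 0$ by assumption, yields $\alpha x_{\xi\xi}-2\beta x_{\xi\eta}+\gamma x_{\eta\eta}=0$ and the corresponding equation for $y$, which are precisely (\ref{eq:EllipticInvEq}). The Dirichlet data carry over directly, since prescribing $(\xi,\eta)$ along the boundary is the same as prescribing the boundary positions $(x,y)$; this is the meaning of the transformed boundary conditions.

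I expect the main obstacle to be purely a matter of careful bookkeeping in the double chain rule, ensuring that the first-order remainder assembles exactly into $(\Delta\xi)\,\phi_\xi+(\Delta\eta)\,\phi_\eta$ with no stray terms. Once that structure is verified, the cancellation of those terms via (\ref{eq:EllipticEq}) and the substitution from Lemma~\ref{JacMatrix_f} are entirely routine.
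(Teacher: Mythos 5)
Your proof is correct, and its skeleton matches the paper's: both arguments analytically invert the Laplace system by expressing the physical Laplacian in the $(\xi,\eta)$ variables and both rest on Lemma~\ref{JacMatrix_f} for the coefficient identities (note that the paper's contravariant vectors are exactly your forward derivatives, $e^1=(\xi_x,\xi_y)^t=J^{-1}(y_\eta,-x_\eta)^t$ and $e^2=(\eta_x,\eta_y)^t=J^{-1}(-y_\xi,x_\xi)^t$, so your three dot products $\xi_x^2+\xi_y^2=J^{-2}\alpha$, $\xi_x\eta_x+\xi_y\eta_y=-J^{-2}\beta$, $\eta_x^2+\eta_y^2=J^{-2}\gamma$ are precisely the paper's $e^1\cdot e^1$, $e^2\cdot e^1$, $e^2\cdot e^2$). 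Where you genuinely diverge is in how the second-order identity is produced, and your route is the more transparent one. The paper expands $\nabla h$ in the contravariant basis and computes $\Delta h=\nabla\cdot(\nabla h)$, which lets it connect to the metric identities $g^{ij}=e^i\cdot e^j$ of (\ref{eq:inv-matrix_g}); but in that expansion the terms containing derivatives of the base vectors are dropped without comment, and the justification for dropping them --- they assemble into $(\Delta\xi)\,h_\xi+(\Delta\eta)\,h_\eta$ and vanish by (\ref{eq:EllipticEq}) --- is left implicit, surfacing only in the vague closing sentence that the result follows ``together with equations (\ref{eq:EllipticEq})''. Your double chain rule keeps that first-order remainder explicit and kills it by hypothesis, and it also cleanly separates the two distinct uses of harmonicity: the hypothesis $\Delta\xi=\Delta\eta=0$ (which removes the first-order terms) versus the trivial identities $\Delta x=\Delta y=0$ for the coordinate functions (which make the right-hand side vanish when you set $\phi=x$ and $\phi=y$). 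The paper's proof conflates these two in the single phrase ``using that $\Delta h=0$''. What the paper's tensor formalism buys in exchange is reusability --- the identities $g^{11}=e^1\cdot e^1$, $g^{21}=e^2\cdot e^1$, $g^{22}=e^2\cdot e^2$ stated right after the theorem, and a derivation that carries over verbatim to general curvilinear coordinates and three dimensions --- whereas your elementary computation is tied to the planar Cartesian Laplacian, which is all this theorem needs.
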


\begin{proof}
the equations (\ref{eq:EllipticEq}) are analytically inverted and the calculations 
are carried out in the physical plane (overlapping mesh).

Given a twice differentiable and continuous scalar function $h$ from 
$\Omega = [a,b] \times [c,d] \subset \mathbb{R}^{2}$ into $\mathbb{R}$,
such that $\Delta h = 0$, then

$$\nabla h = e^1 \frac{\partial h}{\partial \xi} + e^2 \frac{\partial h}{\partial \eta}
= e_1 v^1 + e_2 v^2 \ ,$$
\noindent
where
$$v^1 = (e^1 \frac{\partial h}{\partial \xi} + e^2 \frac{\partial h}{\partial \eta}) \cdot e^1
= e^1 \cdot e^1 \frac{\partial h}{\partial \xi} + e^2 \cdot e^1 \frac{\partial h}{\partial \eta} \ ,$$
$$v^2 = (e^1 \frac{\partial h}{\partial \xi} + e^2 \frac{\partial h}{\partial \eta}) \cdot e^2
= e^1 \cdot e^2 \frac{\partial h}{\partial \xi} + e^2 \cdot e^2 \frac{\partial h}{\partial \eta} \ .$$

Calculating the Laplacian of $h$
$$\Delta h = \nabla \cdot (\nabla h) =
e^1 \cdot \frac{\partial}{\partial \xi} (e_1 v^1 + e_2 v^2) +
e^2 \cdot \frac{\partial}{\partial \eta} (e_1 v^1 + e_2 v^2) =$$
$$\frac{\partial}{\partial \xi} v_1 e^1 \cdot e_1 +
\frac{\partial}{\partial \xi} v_2 e^1 \cdot e_2 +
\frac{\partial}{\partial \eta} v_1 e^2 \cdot e_1 +
\frac{\partial}{\partial \eta} v_2 e^2 \cdot e_2 =$$
$$e^1 \cdot e^1 \frac{\partial^2 h}{\partial \xi^2} +
2 e^2 \cdot e^1 \frac{\partial^2 h}{\partial \xi \partial \eta} +
e^2 \cdot e^2 \frac{\partial^2 h}{\partial \eta^2} \ .$$

The terms $e^1 \cdot e^1$, $e^2 \cdot e^2$ and $e^2 \cdot e^1$ are
$$e^1 \cdot e^1 = (J^{-1})^2 (x_{\eta} ^2 + y_{\eta} ^2), \ \
e^2 \cdot e^2 = (J^{-1})^2 (x_{\xi} ^2 + y_{\xi} ^2),$$
$${\rm and} \ \ e^2 \cdot e^1 = -(J^{-1})^2 (x_{\xi} x_{\eta} + y_{\xi} y_{\eta}).$$

Now, defining $\alpha = x_{\eta} ^2 + y_{\eta} ^2$, 
$\beta = x_{\xi} x_{\eta} + y_{\xi} y_{\eta}$ and
$\gamma = x_{\xi} ^2 + y_{\xi} ^2$, and using that $\Delta h = 0$, follows
$$\alpha \frac{\partial^2 h}{\partial \xi^2} -
2 \beta \frac{\partial^2 h}{\partial \xi \partial \eta} +
\gamma \frac{\partial^2 h}{\partial \eta^2} = 0 \ .$$

Finally, the non-linear inverted differential equations 
(\ref{eq:EllipticInvEq}) follows directly, 
for the functions $\xi$ and $\eta$ of the transformation 
$f$ together with equations (\ref{eq:EllipticEq}).
\end{proof}

In the proof of Theorem~\ref{EllipticInvertedEq},
formulas for $e^1 \cdot e^1$, $e^2 \cdot e^2$ and $e^2 \cdot e^1$ were deduced. 
This results together with equations (\ref{eq:inv-matrix_g}), implies that
$$e^1 \cdot e^1 = (J^{-1})^2 (x_{\eta} ^2 + y_{\eta} ^2) = g^{-1} g_{22} = g^{11},$$
$$e^2 \cdot e^2 = (J^{-1})^2 (x_{\xi} ^2 + y_{\xi} ^2) = g^{-1} g_{11} = g^{22},$$
$${\rm and} \ \ e^2 \cdot e^1 = -(J^{-1})^2 (x_{\xi} x_{\eta} + y_{\xi} y_{\eta}) = -g^{-1} g_{21} = g^{21}.$$
Then, $g^{ij} = e^i \cdot e^j$ for $i,j = 1,2$, i.e.
the components of the inverse matrix of the matrix $(g_{ij})$ can be calculated
by inner products of the contravariant base vectors.

\begin{Lemma} \label{pos_definite} 
The equations (\ref{eq:EllipticInvEq}) in Theorem~\ref{EllipticInvertedEq} are elliptic.
\end{Lemma}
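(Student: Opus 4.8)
The plan is to reduce the claim to the standard discriminant criterion for a second-order linear operator and then exploit the Lagrange identity, which the paper has already packaged in equation (\ref{eq:matrix_g}). Recall that an operator of the form $a\,u_{\xi\xi}+2b\,u_{\xi\eta}+c\,u_{\eta\eta}$ is elliptic at a point precisely when its symbol is definite, i.e. when $ac-b^2>0$ (equivalently, the symmetric matrix $\left(\begin{smallmatrix} a & b \\ b & c \end{smallmatrix}\right)$ has positive determinant and does not change type).

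First I would read off the coefficients of the principal part of the operator $L$ in (\ref{eq:EllipticInvEq}). Matching $\alpha\,x_{\xi\xi}-2\beta\,x_{\xi\eta}+\gamma\,x_{\eta\eta}$ against the normal form gives $a=\alpha$, $b=-\beta$, and $c=\gamma$, so the discriminant condition to verify is $\alpha\gamma-\beta^2>0$. Since both scalar equations $Lx=0$ and $Ly=0$ share the identical principal part, it suffices to establish this single inequality; ellipticity of the system then follows for both components at once.

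Next I would compute $\alpha\gamma-\beta^2$ explicitly. Substituting $\alpha=x_\eta^2+y_\eta^2$, $\gamma=x_\xi^2+y_\xi^2$ and $\beta=x_\xi x_\eta+y_\xi y_\eta$ and expanding, the cross terms cancel and one is left with the Lagrange identity $\alpha\gamma-\beta^2=(x_\xi y_\eta-x_\eta y_\xi)^2$. This is exactly the quantity $g=J^2$ already recorded in (\ref{eq:matrix_g}), since $\alpha=g_{22}$, $\gamma=g_{11}$ and $\beta=g_{12}$; thus no fresh computation is really needed beyond invoking that line.

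Finally I would close the argument by appealing to the nondegeneracy hypothesis imposed in the formulation of the transformation, namely $J=x_\xi y_\eta-x_\eta y_\xi\neq0$. This forces $\alpha\gamma-\beta^2=J^2>0$ strictly, so the discriminant condition holds wherever the inverse transformation is nonsingular, and the operator $L$ is (uniformly) elliptic there. There is no genuine obstacle in this proof: the only substantive point is recognizing that the discriminant collapses to $J^2$ via the Lagrange identity, which is precisely why the nonvanishing-Jacobian requirement is the natural and sufficient condition guaranteeing ellipticity of (\ref{eq:EllipticInvEq}).
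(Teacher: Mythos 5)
Your proof is correct. The paper actually gives no proof of this lemma at all---it dismisses it in a remark as a ``sufficiently known result'' whose proof is ``straightforward''---and your argument (reading off the discriminant $\alpha\gamma-\beta^2$, collapsing it to $J^2$ via the Lagrange identity already implicit in (\ref{eq:matrix_g}), and invoking the standing hypothesis $J\neq 0$) is precisely the standard argument the authors are alluding to.
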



\medskip
\noindent
{\it Discretization.}

The equations in (\ref{eq:EllipticInvEq}) are coupled and nonlinear.
The unknowns are the positions of the mesh nodes in the physical plane $x$ and $y$.

\begin{Definition} \label{oper_diferencias} 
Given a twice differentiable and continuous scalar function $x$ from 
$\Omega = [a,b] \times [c,d] \subset \mathbb{R}^{2}$ into $\mathbb{R}$, 
let's define the first and second order finite difference operators of 
$x$ with respect to $\xi$ and $\eta$ as follows
\begin{equation}
\label{eq:DiffFinitas}
\begin{array}{l}
x_{\overline{\xi}}(\xi,\eta) \equiv
\frac{x(\xi + \Delta \xi,\eta)  - x(\xi - \Delta \xi,\eta)}{2 \Delta \xi}, \\
x_{\overline{\eta}}(\xi,\eta) \equiv
\frac{x(\xi,\eta + \Delta \eta)  - x(\xi,\eta  - \Delta \eta)}{2 \Delta \eta}, \\
x_{\overline{\xi \xi}}(\xi,\eta) \equiv
\frac{x(\xi + \Delta \xi,\eta) - 2 x(\xi,\eta) + x(\xi - \Delta \xi,\eta)}{(\Delta \xi) ^2}, \\
x_{\overline{\eta \eta}}(\xi,\eta) \equiv
\frac{x(\xi,\eta + \Delta \eta) - 2 x(\xi,\eta) + x(\xi,\eta  - \Delta \eta)}{(\Delta \eta) ^2}, \\
x_{\overline{\xi \eta}}(\xi,\eta) \equiv 
\frac{x(\xi + \Delta \xi,\eta + \Delta \eta) - x(\xi + \Delta \xi,\eta - \Delta \eta)
+ x(\xi - \Delta \xi,\eta - \Delta \eta) - x(\xi - \Delta \xi,\eta + \Delta \eta)}
{4 \; \Delta \xi \Delta \eta}.
\end{array}
\end{equation}
where $\Delta \xi$ and $\Delta \eta$ are the constant spacings in the $\xi$ and $\eta$ 
directions respectively.
\end{Definition}

\begin{Lemma} \label{DF-approx-error}
Given a twice differentiable and continuous scalar function $x$ from 
$\Omega = [a,b] \times [c,d] \subset \mathbb{R}^{2}$ into $\mathbb{R}$,
the approximation errors for the first and second derivatives of $x$,
using the finite difference operators (\ref{eq:DiffFinitas}) 
given in definition~\ref{oper_diferencias}, are order 2.
\end{Lemma}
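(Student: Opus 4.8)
The plan is to apply Taylor's theorem to each of the five difference operators in (\ref{eq:DiffFinitas}), expand every shifted evaluation of $x$ about the base point $(\xi,\eta)$, and then read off the leading term of the truncation error. Although the statement assumes only that $x$ is twice differentiable and continuous, a rigorous order-two bound through Taylor remainders needs control of higher derivatives, so the first step is to make precise the regularity actually used: continuous partial derivatives up to order three for the first-derivative operators $x_{\overline{\xi}}$, $x_{\overline{\eta}}$, and up to order four for the second-derivative operators $x_{\overline{\xi\xi}}$, $x_{\overline{\eta\eta}}$, $x_{\overline{\xi\eta}}$. Under this hypothesis each Taylor remainder, written in Lagrange form, involves a derivative that is continuous and hence bounded on compact subsets of $\Omega$, which is exactly what converts the formal expansions into genuine $O(h^2)$ estimates.

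First I would treat the centred first-derivative operators. Expanding $x(\xi \pm \Delta\xi,\eta)$ about $(\xi,\eta)$, the value term and the $x_{\xi\xi}$ term cancel in the difference while the odd-order terms add, so that
\[
x_{\overline{\xi}}(\xi,\eta) = x_{\xi}(\xi,\eta) + \frac{(\Delta\xi)^2}{6}\, x_{\xi\xi\xi}(\xi,\eta) + O\big((\Delta\xi)^4\big).
\]
Since $x_{\xi\xi\xi}$ is bounded, the truncation error is $O\big((\Delta\xi)^2\big)$; the argument for $x_{\overline{\eta}}$ is identical with the roles of $\xi$ and $\eta$ exchanged. Next I would handle the centred second-derivative operators: adding the expansions of $x(\xi \pm \Delta\xi,\eta)$ carried to fourth order and subtracting $2x(\xi,\eta)$, the odd-order terms cancel by symmetry and the fourth-order term survives, giving
\[
x_{\overline{\xi\xi}}(\xi,\eta) = x_{\xi\xi}(\xi,\eta) + \frac{(\Delta\xi)^2}{12}\, x_{\xi\xi\xi\xi}(\xi,\eta) + O\big((\Delta\xi)^4\big),
\]
again an $O\big((\Delta\xi)^2\big)$ error, and similarly for $x_{\overline{\eta\eta}}$.

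The main obstacle will be the mixed operator $x_{\overline{\xi\eta}}$, which requires a genuinely two-dimensional Taylor expansion of the four corner evaluations $x(\xi \pm \Delta\xi,\eta \pm \Delta\eta)$ and careful bookkeeping of the resulting cross terms. Writing each corner with signs $(\pm,\pm)$ and forming the signed combination in (\ref{eq:DiffFinitas}), one checks that every contribution in which the power of $\Delta\xi$ or the power of $\Delta\eta$ is even cancels, so that only the terms odd in both variables remain; the lowest of these is $4\,\Delta\xi\,\Delta\eta\, x_{\xi\eta}$, and after dividing by $4\,\Delta\xi\,\Delta\eta$ one obtains
\[
x_{\overline{\xi\eta}}(\xi,\eta) = x_{\xi\eta}(\xi,\eta) + \frac{(\Delta\xi)^2}{6}\, x_{\xi\xi\xi\eta}(\xi,\eta) + \frac{(\Delta\eta)^2}{6}\, x_{\xi\eta\eta\eta}(\xi,\eta) + \cdots .
\]
Bounding the four Lagrange remainders by the fourth-order derivatives then yields an error of order $O\big((\Delta\xi)^2 + (\Delta\eta)^2\big)$. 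Collecting the five estimates establishes that every operator in (\ref{eq:DiffFinitas}) is second order, proving the lemma.
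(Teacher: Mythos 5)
Your proof is correct, and it is precisely the ``straightforward'' argument the paper invokes but never writes down: immediately after the statement, the paper only remarks that Lemmas \ref{pos_definite} and \ref{DF-approx-error} ``are sufficiently known results, and their proofs are straightforward,'' so there is no proof in the text to compare against beyond this appeal to standard Taylor-expansion facts. Your expansions carry the right coefficients (the $\frac{(\Delta\xi)^2}{6}x_{\xi\xi\xi}$ term for the centred first difference, the $\frac{(\Delta\xi)^2}{12}x_{\xi\xi\xi\xi}$ term for the centred second difference, and the sign-cancellation analysis showing that only terms odd in both increments survive in the four-corner mixed operator), so the content is complete. One point where you genuinely improve on the paper: as stated, the lemma assumes only that $x$ is twice differentiable, and under mere $C^2$ regularity the centred first difference has error only $o(\Delta\xi)$ (by continuity of $x_{\xi\xi}$ in the Lagrange remainder) while the second-difference operators are merely consistent, with no algebraic rate; the claimed $O(h^2)$ bounds simply do not follow from the stated hypothesis. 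Your explicit strengthening to $C^3$ regularity for the first-derivative operators and $C^4$ for the second-derivative ones is exactly what is needed to make the lemma true as an order-2 statement, so your write-up repairs an imprecision in the paper rather than introducing a gap.
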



\begin{Remark}
Lemmas \ref{pos_definite} and \ref{DF-approx-error} are
sufficiently known results, and their proofs are straightforward.
\end{Remark}


\begin{Lemma} \label{Lx-approx-error}
Given a twice differentiable and continuous scalar function $x$ from 
$\Omega = [a,b] \times [c,d] \subset \mathbb{R}^{2}$ into $\mathbb{R}$,
the operator
$$L x \equiv \alpha x_{\xi \xi} -2 \beta x_{\xi \eta} + \gamma x_{\eta \eta}$$
can be approximated by
$$L_h x \equiv \alpha x_{\overline{\xi \xi}} -2 \beta x_{\overline{\xi \eta}} 
+ \gamma x_{\overline{\eta \eta}}$$
with an approximation error of order 2,
where $x_{\overline{\xi \xi}}$, $x_{\overline{\xi \eta}}$ and $x_{\overline{\eta \eta}}$
are the finite differences (\ref{eq:DiffFinitas}) in definition~\ref{oper_diferencias}.
\end{Lemma}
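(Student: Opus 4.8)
The plan is to reduce the statement entirely to Lemma~\ref{DF-approx-error}, using that $L_h$ is a \emph{linear} combination of the finite difference operators with exactly the same coefficients $\alpha$, $\beta$, $\gamma$ that appear in $L$. First I would subtract the two definitions term by term, obtaining the identity
\[
L_h x - L x = \alpha\,(x_{\overline{\xi\xi}} - x_{\xi\xi}) - 2\beta\,(x_{\overline{\xi\eta}} - x_{\xi\eta}) + \gamma\,(x_{\overline{\eta\eta}} - x_{\eta\eta}),
\]
which isolates the approximation error as a weighted sum of the individual finite-difference errors. No approximation has been invoked at this stage; this is a purely algebraic rearrangement.

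Next I would apply Lemma~\ref{DF-approx-error} to each of the three parenthesized differences separately. That lemma certifies that the centered operators in~(\ref{eq:DiffFinitas}) reproduce the corresponding second derivatives to order $2$, so that $x_{\overline{\xi\xi}} - x_{\xi\xi} = O((\Delta\xi)^2)$, $x_{\overline{\eta\eta}} - x_{\eta\eta} = O((\Delta\eta)^2)$, and $x_{\overline{\xi\eta}} - x_{\xi\eta} = O((\Delta\xi)^2 + (\Delta\eta)^2)$. Setting $h = \max\{\Delta\xi,\Delta\eta\}$, each of the three errors is $O(h^2)$, and no further expansion is required, since the Taylor bookkeeping (including the combination of both spacings in the mixed term, the one place where $\Delta\xi$ and $\Delta\eta$ enter simultaneously) has already been carried out in the proof of Lemma~\ref{DF-approx-error}.

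Finally I would pass from the three pointwise estimates to the operator by the triangle inequality. The step that actually needs justification, rather than mere bookkeeping, is that the coefficients remain bounded: $\alpha = x_\eta^2 + y_\eta^2$, $\beta = x_\xi x_\eta + y_\xi y_\eta$ and $\gamma = x_\xi^2 + y_\xi^2$ are continuous functions of the first derivatives of $x$ and $y$, hence continuous on the compact domain $\Omega$, and therefore attain finite maxima there. With $M$ an upper bound for $|\alpha|$, $2|\beta|$ and $|\gamma|$, the identity above yields $|L_h x - L x| \le M\,(|x_{\overline{\xi\xi}} - x_{\xi\xi}| + |x_{\overline{\xi\eta}} - x_{\xi\eta}| + |x_{\overline{\eta\eta}} - x_{\eta\eta}|) = O(h^2)$, which is the claimed order-$2$ approximation. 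This boundedness of the coefficients is the only genuine obstacle; everything else follows directly from linearity and the already established Lemma~\ref{DF-approx-error}.
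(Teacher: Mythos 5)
Your proposal is correct and follows essentially the same route as the paper's own proof: the identical algebraic decomposition $L_h x - L x = \alpha (x_{\overline{\xi \xi}} - x_{\xi \xi}) - 2 \beta (x_{\overline{\xi \eta}} - x_{\xi \eta}) + \gamma (x_{\overline{\eta \eta}} - x_{\eta \eta})$ followed by an application of Lemma~\ref{DF-approx-error} to each term. Your additional remark that the coefficients $\alpha$, $\beta$, $\gamma$ are bounded on the compact domain $\Omega$ makes explicit a point the paper leaves implicit, but it does not change the argument.
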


\begin{proof}
the proof follows from a straightforward calculation
using the results in lemma~\ref{DF-approx-error}
$$L_h x - L x = \alpha (x_{\overline{\xi \xi}} - x_{\xi \xi}) 
- 2 \beta (x_{\overline{\xi \eta}} - x_{\xi \eta}) + \gamma (x_{\overline{\eta \eta}} - x_{\eta \eta})
= O((\Delta \xi)^2 + (\Delta \eta)^2). \qedhere$$
\end{proof}

\begin{Theorem} \label{DiscreteEllipticInvertedEq}
The discrete version of the system of equations (\ref{eq:EllipticInvEq}), 
for any vertex $(i,j)$ of the mesh in the physical plane, can be written as follows
\begin{equation}
\label{eq:EllipticDisc-1}
\begin{array}{l}
2(x_{i+1,j} -2x_{i,j} + x_{i-1,j}) \cdot ((x_{i,j+1} - x_{i,j-1})^2 + (y_{i,j+1} - y_{i,j-1})^2) - \\
  -(x_{i+1,j+1} - x_{i+1,j-1} + x_{i-1,j-1} - x_{i-1,j+1}) \cdot  \\
  \cdot ((x_{i+1,j} - x_{i-1,j}) \cdot (x_{i,j+1} - x_{i,j-1}) + \\
   + (y_{i+1,j} - y_{i-1,j}) \cdot (y_{i,j+1} - y_{i,j-1})) +  \\
   +2(x_{i,j+1} -2x_{i,j} + x_{i,j-1}) \cdot \\
  \cdot ((x_{i+1,j} - x_{i-1,j})^2 + (y_{i+1,j} - y_{i-1,j})^2) = 0,
\end{array}
\end{equation}
\begin{equation}
\label{eq:EllipticDisc-2}
\begin{array}{l}
2(y_{i+1,j} -2y_{i,j} + y_{i-1,j}) \cdot ((x_{i,j+1} - x_{i,j-1})^2 + (y_{i,j+1} - y_{i,j-1})^2) - \\
  -(y_{i+1,j+1} - y_{i+1,j-1} + y_{i-1,j-1} - y_{i-1,j+1}) \cdot \\
  \cdot ((x_{i+1,j} - x_{i-1,j}) \cdot (x_{i,j+1} - x_{i,j-1}) + \\
  + (y_{i+1,j} - y_{i-1,j}) \cdot (y_{i,j+1} - y_{i,j-1})) + \\
  +2(y_{i,j+1} -2y_{i,j} + y_{i,j-1}) \cdot ((x_{i+1,j} - x_{i-1,j})^2 + (y_{i+1,j} - y_{i-1,j})^2) = 0.
\end{array}
\end{equation}
\end{Theorem}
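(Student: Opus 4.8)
The plan is to obtain the two stated difference equations by discretizing the operator of Lemma~\ref{Lx-approx-error} at a generic node $(i,j)$ and then clearing denominators. First I would focus on the first equation of (\ref{eq:EllipticInvEq}) and invoke Lemma~\ref{Lx-approx-error} to replace $Lx=0$ by its second-order consistent discrete analogue
$$L_h x \equiv \alpha\, x_{\overline{\xi \xi}} - 2\beta\, x_{\overline{\xi \eta}} + \gamma\, x_{\overline{\eta \eta}} = 0,$$
where, crucially, the variable coefficients $\alpha$, $\beta$, $\gamma$ are themselves discretized: using the first-order centered operators of Definition~\ref{oper_diferencias} I replace $x_\xi, y_\xi$ by $x_{\overline{\xi}}, y_{\overline{\xi}}$ and $x_\eta, y_\eta$ by $x_{\overline{\eta}}, y_{\overline{\eta}}$, so that
$$\alpha = (x_{\overline{\eta}})^2 + (y_{\overline{\eta}})^2, \quad \beta = x_{\overline{\xi}} x_{\overline{\eta}} + y_{\overline{\xi}} y_{\overline{\eta}}, \quad \gamma = (x_{\overline{\xi}})^2 + (y_{\overline{\xi}})^2.$$

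Next I would substitute the explicit stencils from Definition~\ref{oper_diferencias} evaluated at $(i,j)$, writing $x_{i\pm 1,j}$, $x_{i,j\pm 1}$, $x_{i\pm 1,j\pm 1}$ for the neighbouring nodal values. Each of the three terms then carries an explicit power of the spacings in its denominator: the term $\alpha\, x_{\overline{\xi \xi}}$ has denominator $4(\Delta\eta)^2(\Delta\xi)^2$ (from $(2\Delta\eta)^2$ in $\alpha$ and $(\Delta\xi)^2$ in $x_{\overline{\xi \xi}}$), the mixed term $2\beta\, x_{\overline{\xi \eta}}$ has denominator $8(\Delta\xi)^2(\Delta\eta)^2$ (from $4\Delta\xi\Delta\eta$ in $\beta$, $4\Delta\xi\Delta\eta$ in $x_{\overline{\xi \eta}}$, reduced by the factor $2$), and $\gamma\, x_{\overline{\eta \eta}}$ has denominator $4(\Delta\xi)^2(\Delta\eta)^2$.

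The decisive step is to multiply the whole equation $L_h x = 0$ by $8(\Delta\xi)^2(\Delta\eta)^2$. This clears all three denominators simultaneously, and because every term carries exactly the factor $(\Delta\xi)^2(\Delta\eta)^2$ in its denominator, the spacings cancel identically; hence the resulting polynomial relation is independent of $\Delta\xi$ and $\Delta\eta$, and in particular does not require $\Delta\xi = \Delta\eta$. Under this multiplier the $\alpha$- and $\gamma$-terms acquire the factor $8/4 = 2$ seen in front of the second differences, while the mixed term acquires the clean coefficient $8/8 = 1$, reproducing exactly equation (\ref{eq:EllipticDisc-1}). Finally I would repeat the identical manipulation with $L_h y = 0$, observing that $\alpha$, $\beta$, $\gamma$ are unchanged, to obtain (\ref{eq:EllipticDisc-2}).

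I expect no genuine obstacle here; the whole content is careful bookkeeping of the difference stencils, and the one conceptual observation worth stating is that cancellation of the spacings is precisely what yields the stated spacing-free form. The place where a sign or factor error would most easily creep in is the mixed second difference $x_{\overline{\xi \eta}}$, whose four-corner numerator $x_{i+1,j+1} - x_{i+1,j-1} + x_{i-1,j-1} - x_{i-1,j+1}$ must be paired correctly with $2\beta$ and with the overall minus sign so that the cross term lands with denominator $8(\Delta\xi)^2(\Delta\eta)^2$ and emerges with coefficient exactly $-1$ after the multiplication.
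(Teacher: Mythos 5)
Your proposal is correct and follows exactly the route the paper takes: the paper's own proof simply states that the finite differences of Definition~\ref{oper_diferencias} are substituted for the derivatives in (\ref{eq:EllipticInvEq}) and that ``after some calculations'' the system (\ref{eq:EllipticDisc-1}--\ref{eq:EllipticDisc-2}) is obtained. Your write-up supplies precisely those omitted calculations --- discretizing $\alpha$, $\beta$, $\gamma$ with the centered first differences, substituting the stencils, and clearing the common factor $(\Delta\xi)^2(\Delta\eta)^2$ by multiplying through by $8(\Delta\xi)^2(\Delta\eta)^2$ to produce the spacing-free coefficients $2$, $-1$, $2$ --- so it is, if anything, more complete than the published proof.
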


\begin{proof}
the finite differences (\ref{eq:DiffFinitas}) given in definition~\ref{oper_diferencias} 
are used to approximate the derivatives appearing
in the set of elliptic equations (\ref{eq:EllipticInvEq}) in the physical plane.

After some calculations, the discrete version (\ref{eq:EllipticDisc-1}-\ref{eq:EllipticDisc-2}) 
of the system of equations (\ref{eq:EllipticInvEq}) is obtained.
\end{proof}

The nonlinear system (\ref{eq:EllipticDisc-1}-\ref{eq:EllipticDisc-2}) 
consists of $2mn$ equations, where $m$ and $n$ are the 
dimensions of the initial cartesian grid.
In this nonlinear system, the coordinates of the vertices on the internal and external boundaries are
fixed. If $p$ is the number of fixed vertices, then the final nonlinear system has $r=2(mn-p)$ equations
and unknowns. These unknowns correspond to the coordinates of the overlapping mesh nodes which are not
fixed (i.e. those which are not vertices).

\section{Algorithms} \label{QGG-algoritmo}


The general process for generating the mesh honoring the internal boundaries,
given in section \ref{QGG-metodologia}, can be translated into the following algorithm.
This algorithm was implemented in fortran 90.

\begin{enumerate}
\setlength\itemsep{0em}
\item Decompose the QIAC and IQIAC into a set of SIAC
\item Extend the IAC to the external boundaries to form SIAC
\item Generate the initial uniform cartesian grid
\item For each SIAC (including the original SIAC)
  \begin{enumerate}
  \setlength\itemsep{0em}
  \item Associate a line from the initial grid
  \item Redistribute the nodes on the associated line into the SIAC
  \end{enumerate}
\item Redistribute nodes on the external boundaries
\item Smooth the overlapping mesh 
\end{enumerate}

In item 4 it is important to point out that the treatment is slightly different for
SIAC coming from QIAC or IQIAC. In this case, these QIAC or IQIAC are first enclosed
in their minimal rectangles such that the processes of line association and redistribution
of nodes happen locally. The overlapping local grid is then connected logically to the
initial cartesian grid.

The smooth mesh is obtained by solving the set of elliptic equations (\ref{eq:EllipticInvEq})
presented in section 3. As mentioned before, this set of elliptic equations
is approximated by finite differences, conducing to the nonlinear system 
(\ref{eq:EllipticDisc-1}-\ref{eq:EllipticDisc-2}).

This system of nonlinear equations $F(v)=0$, with $F:\mathbf{R}^{r}\rightarrow \mathbf{R}^{r}$ 
and the vector $v$ representing the final coordinates of each node of the grid,
is solved numerically by SANE
(Spectral Approach for Nonlinear Equations) method (see \citep{LaCruzRaydan2003}).
The methodology is based on minimization techniques without restrictions, where the
objective function involved is $g(v)=F(v)^{t}F(v)$.
The key issue in this methodology is the combination between the search direction
($d_{k}=\pm F(x_{k})$) and the fact that the nonmonotone line search involves a
quadratic interpolation. The Jacobian matrix of $F$ is, in general, non symmetric and
large.

The SANE method, implemented in fortran 90, is presented in SANE Algorithm.
It is important to remark that this algorithm only involves one smooth process,
compared to previous works (see 
\citep{Hyman2000}, \citep{Borregales_etal2009} and \citep{Valido_etal2012}).

\noindent \begin{center}
\textbf{SANE Algorithm} (see \citep{LaCruzRaydan2003}).

\begin{tabular}{ll}
Let & $\alpha_{0} \in \mathbf{R}$, $M\geq0$, $\gamma>0$ and
$0<\sigma_{1}<\sigma_{2}<1$, $0<\epsilon<1$\tabularnewline
    & and $\delta\in[\epsilon,\frac{1}{\epsilon}]$.\tabularnewline
Let & $v_{0}\in \mathbf{R}^{r}$ be the initial guess and set $k=0$.\tabularnewline
Step 1: & If $\left\Vert F_{k}\right\Vert =0$, stop the process.\tabularnewline
Step 2: & If $\frac{|F_{k}^{t}\: J_{k}\: F_{k}|}{F_{k}^{t}\: F_{k}}<\epsilon$,
stop the process.\tabularnewline
Step 3: & If $\alpha_{k}\leq\epsilon$ or $\alpha_{k}\geq\frac{1}{\epsilon},$
then set $\alpha_{k}=\delta$.\tabularnewline
Step 4: & Set $sgn_{k}=sgn(F_{k}^{t}\: J_{k}\: F_{k})$ and 
  $d_{k}=-sgn_{k}\: F_{k}$.\tabularnewline
Step 5: & Set $\lambda=\frac{1}{\alpha_{k}}$.\tabularnewline
Step 6: & If $f(v_{k}+\lambda d_{k})\leq\max_{0\leq j\leq\min\{k,M\}}f(v_{k-j})+
  2\gamma\lambda\: F_{k}^{t}\: J_{k}\: d_{k}$,\tabularnewline
        & go to Step 8.\tabularnewline
Step 7: & Choose $\sigma\in[\sigma_{1},\sigma_{2}]$, and set $\lambda=\sigma\lambda$.
  Now go to Step 6.\tabularnewline
Step 8: & Set $\lambda_{k}=\lambda$, $v_{k+1}=v_{k}+\lambda_{k}\: d_{k}$,
  $w_{k}=F_{k+1}-F_{k}$.\tabularnewline
Step 9: & Set $\alpha_{k+1}=sgn_{k}\left(\frac{d_{k}^{t}\: w_{k}}{\lambda_{k}\: d_{k}^{t}\: d_{k}}\right)$,
  $k=k+1$ and go to Step 1.\tabularnewline
\end{tabular}
\par\end{center}

\section{Examples and numerical results} \label{QGG-ejemplos}

Examples 1 to 4 presented in this section correspond to rectangular oil reservoirs.
The internal boundaries to be considered are SIAC, QIAC and IQIAC and combinations
of these.
Example 1 (see Fig.\ref{Example1}) includes two horizontal and one vertical SIAC,
example 2 (see Fig.\ref{Example2}) consists of one QIAC,
example 3 (see Fig.\ref{Example3}) contains three QIAC, and
example 4 (see Fig.\ref{Example4}) includes three SIAC, one QIAC, and one IQIAC.
Table 1 shows the results obtained with SANE and the Newton-GMRES methodologies.

In Table 1, $||F||_{2}$ is the 2-norm of the function $F$, and 
$||M_{1}-M_{2}||_{\infty}$ corresponds to the 
infinite norm of the vector difference of the coordinates of the meshes 
obtained with both methodologies.
$L$ represents the size of the domain (side length of the domain, in all cases
the domain is a square).
Additionally, N-GMRES means Newton's method, nested with a GMRES type method.

\noindent \begin{center}
\textbf{Table 1}. Comparison of SANE and Newton-GMRES methods.\smallskip{}

\begin{tabular}{|c|c|c|c|c|}
\hline 
Ex & Methods & $||F||_{2}$ & $\frac{||M_{1}-M_{2}||_{\infty}}{L}$ & Time\tabularnewline
         &         &             &                            & Iterations\tabularnewline
\hline
\hline  
Ex 1 & N-GMRES & $2.0425\times10^{-1}$ & $0.064585$ & $0.9840$ sec\tabularnewline
          &         &                       &                       & 8N / 8 GMRES\tabularnewline
\cline{2-3} \cline{5-5} 
  & SANE & $2.0430\times10^{-1}$ &  & $0.1100$ sec\tabularnewline
  &      &                       &  & 185\tabularnewline
\hline 
Ex 2 & N-GMRES & $2.4367\times10^{-2}$ & $0.0082315$ & $0.5310$ sec\tabularnewline
          &         &                       &                       & 6N / 6 GMRES\tabularnewline
\cline{2-3} \cline{5-5} 
  & SANE & $2.4414\times10^{-2}$ &  & $0.0310$ sec\tabularnewline
  &      &                       &  & 107\tabularnewline
\hline 
Ex 3 & N-GMRES & $6.2817\times10^{-1}$ & $0.055343$ & $1.2970$ sec\tabularnewline
          &         &                       &                       & 24N / 24 GMRES\tabularnewline
\cline{2-3} \cline{5-5} 
  & SANE & $5.3339\times10^{-1}$ &  & $0.0780$ sec\tabularnewline
  &      &                       &  & 93\tabularnewline
\hline 
Ex 4 & N-GMRES & $2.6808\times10^{-3}$ & $1.16205$ & $54.6710$ sec\tabularnewline
          &         &                       &          & 55N / 55 GMRES\tabularnewline
\cline{2-3} \cline{5-5} 
  & SANE & $2.8194\times10^{-3}$ &  & $0.4530$ sec\tabularnewline
  &      &                       &  & 152\tabularnewline
\hline
\end{tabular}
\par\end{center}

Example 5 combines the use of QIAC with SIAC to model a channel of high permeability 
and the drainage areas of two producer wells in the vicinity of the channel 
(see Fig.\ref{Example5}).

Example 6 represents a circumference shaped obstacle within a reservoir 
(see Fig.\ref{Example6}) where the permeability could be low compared to 
the other areas of $\Omega$.

In both cases it is observed how good the mesh adapts to the internal boundary.

Buitrago et al. used this type of meshes to have a good representation of the
internal structures of the reservoir and to solve the convection diffusion equation
in two dimensions based on finite volume methods (see \citep{Buitragoetal2015}).

\begin{figure}[ht]
\noindent \begin{centering}
{\includegraphics[width=2.2in,height=2.2in]{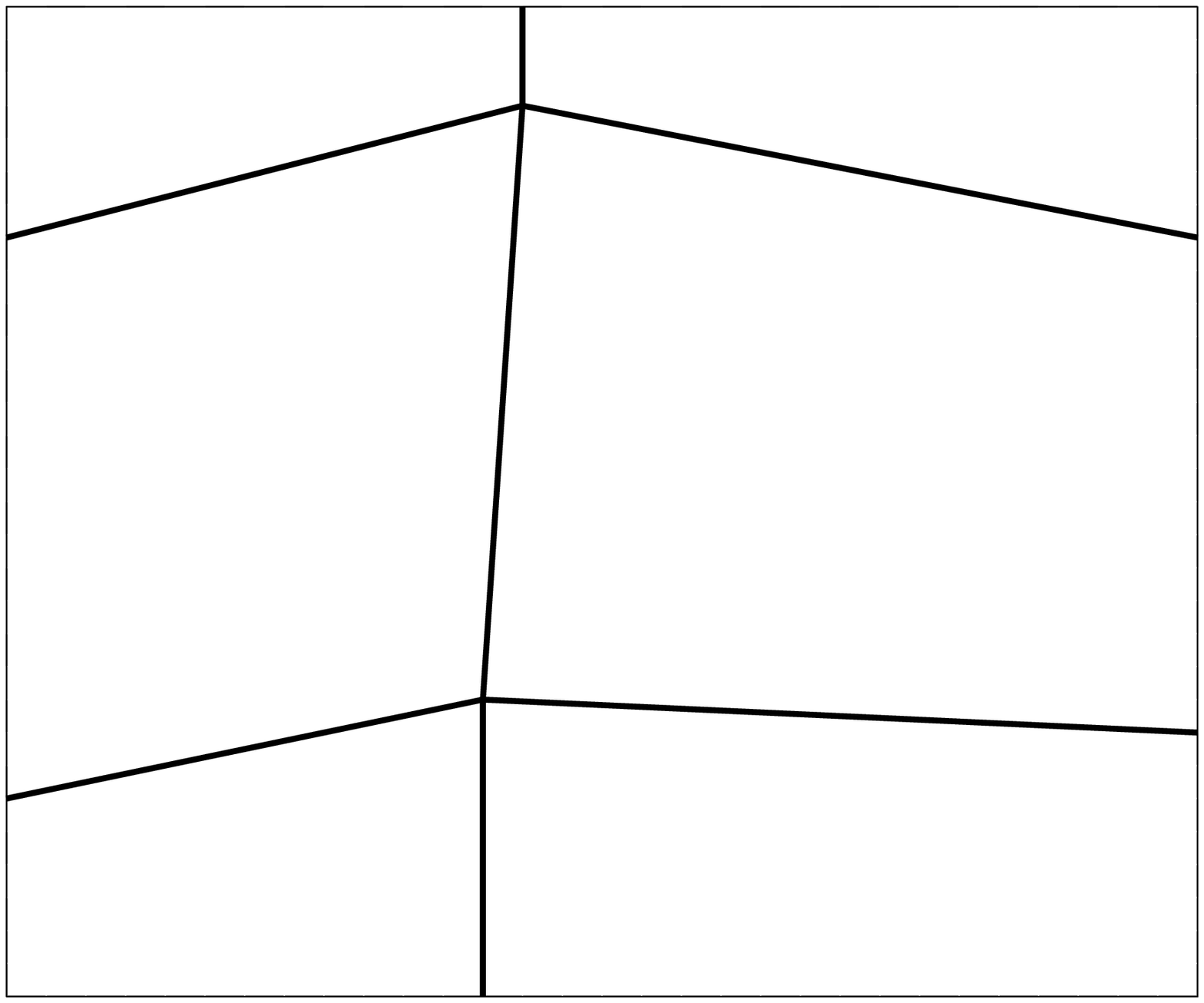}
\includegraphics[width=2.2in,height=2.2in]{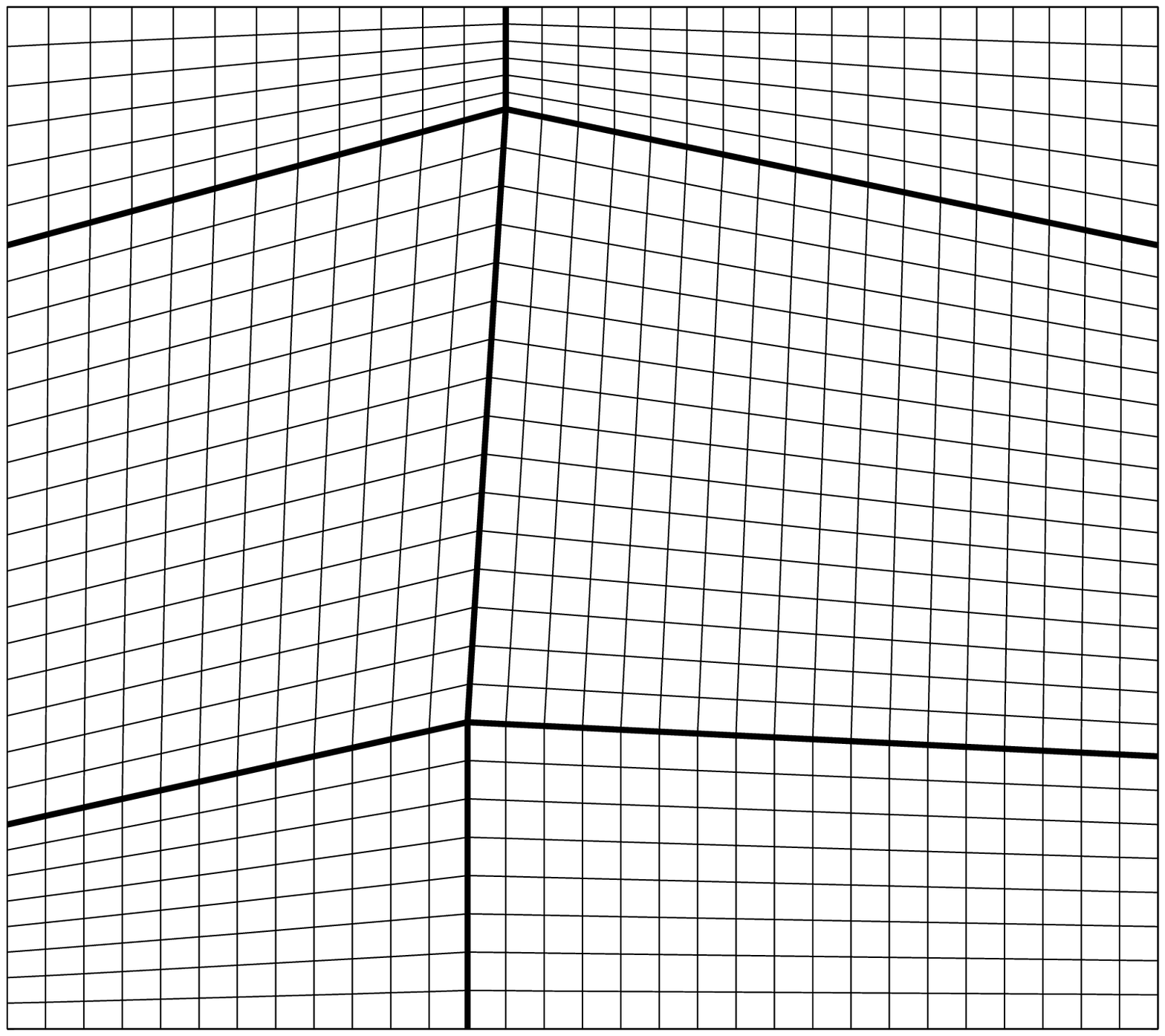}}
\par\end{centering}

\noindent \centering{}\caption{Example 1 with three SIAC, two horizontal and one vertical.
On the right: the smooth mesh is supported on the internal boundaries.}
\label{Example1}
\end{figure}

\begin{figure}[ht]
\noindent \begin{centering}
{\includegraphics[width=2.2in,height=2.2in]{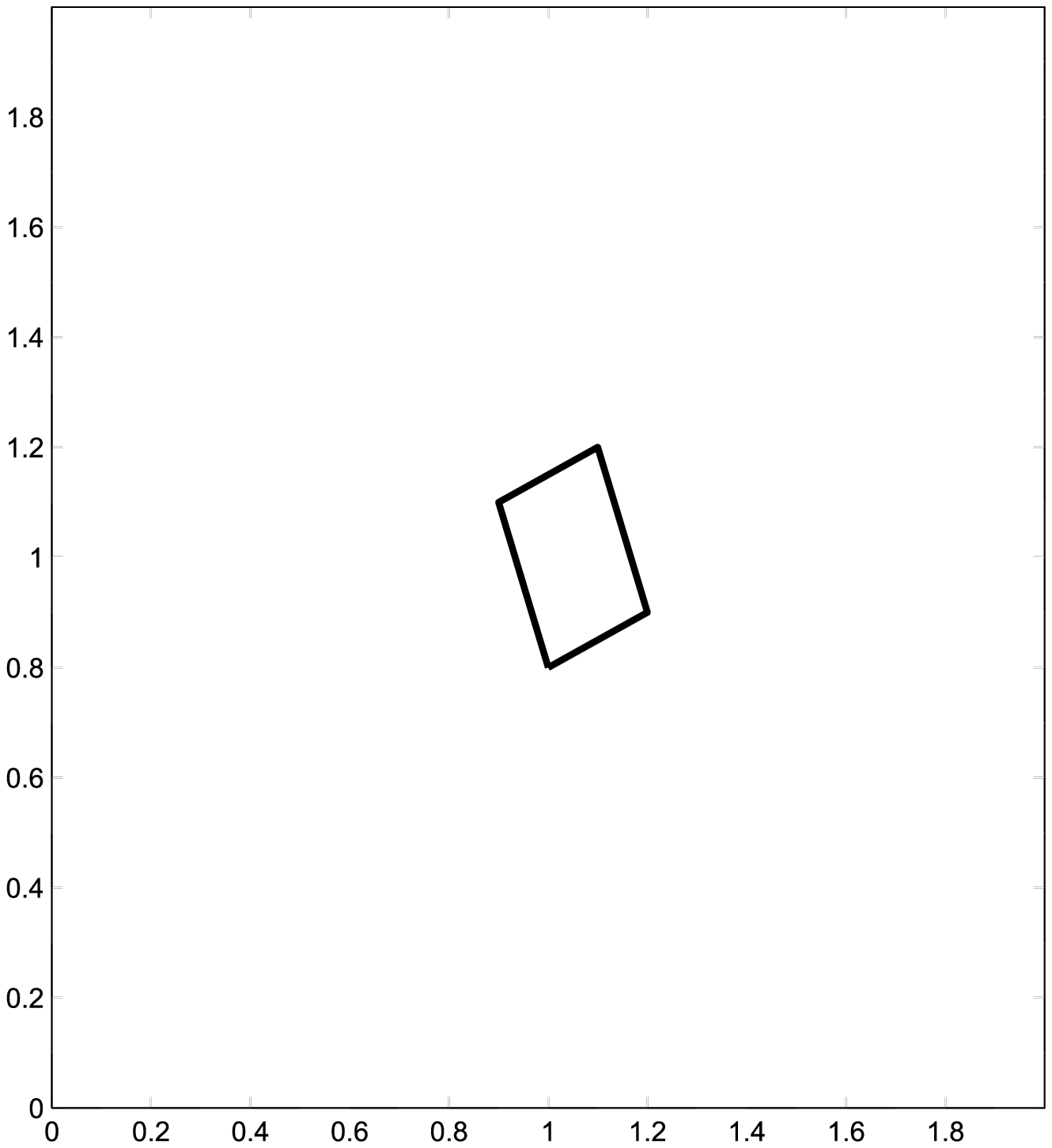}
\includegraphics[width=2.2in,height=2.2in]{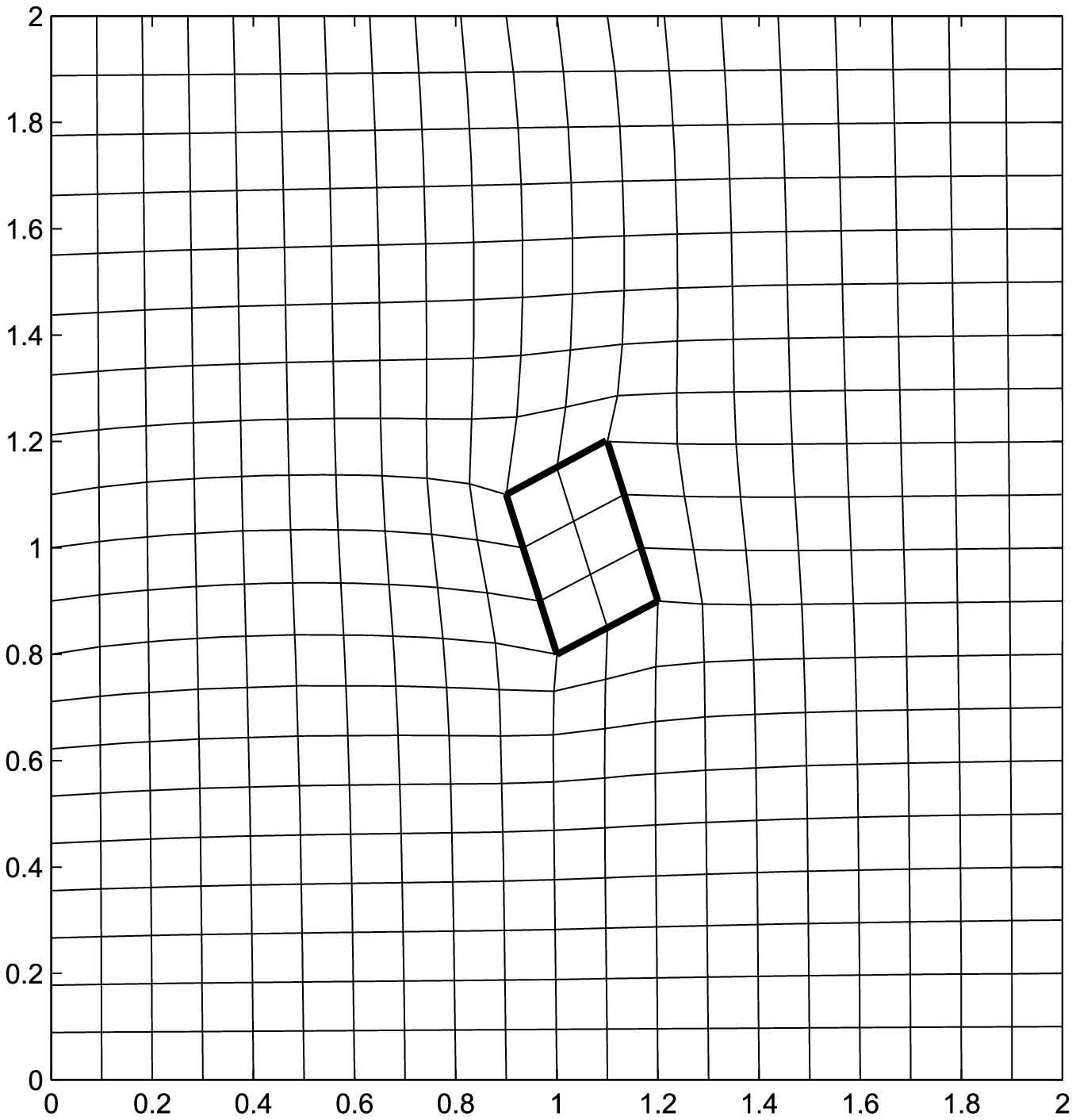}}
\par\end{centering}

\noindent \centering{}\caption{Example 2 with one QIAC.
On the right: the smooth mesh is supported on the internal boundaries.}
\label{Example2}
\end{figure}

\begin{figure}[ht]
\noindent \begin{centering}
{\includegraphics[width=2.2in,height=2.2in]{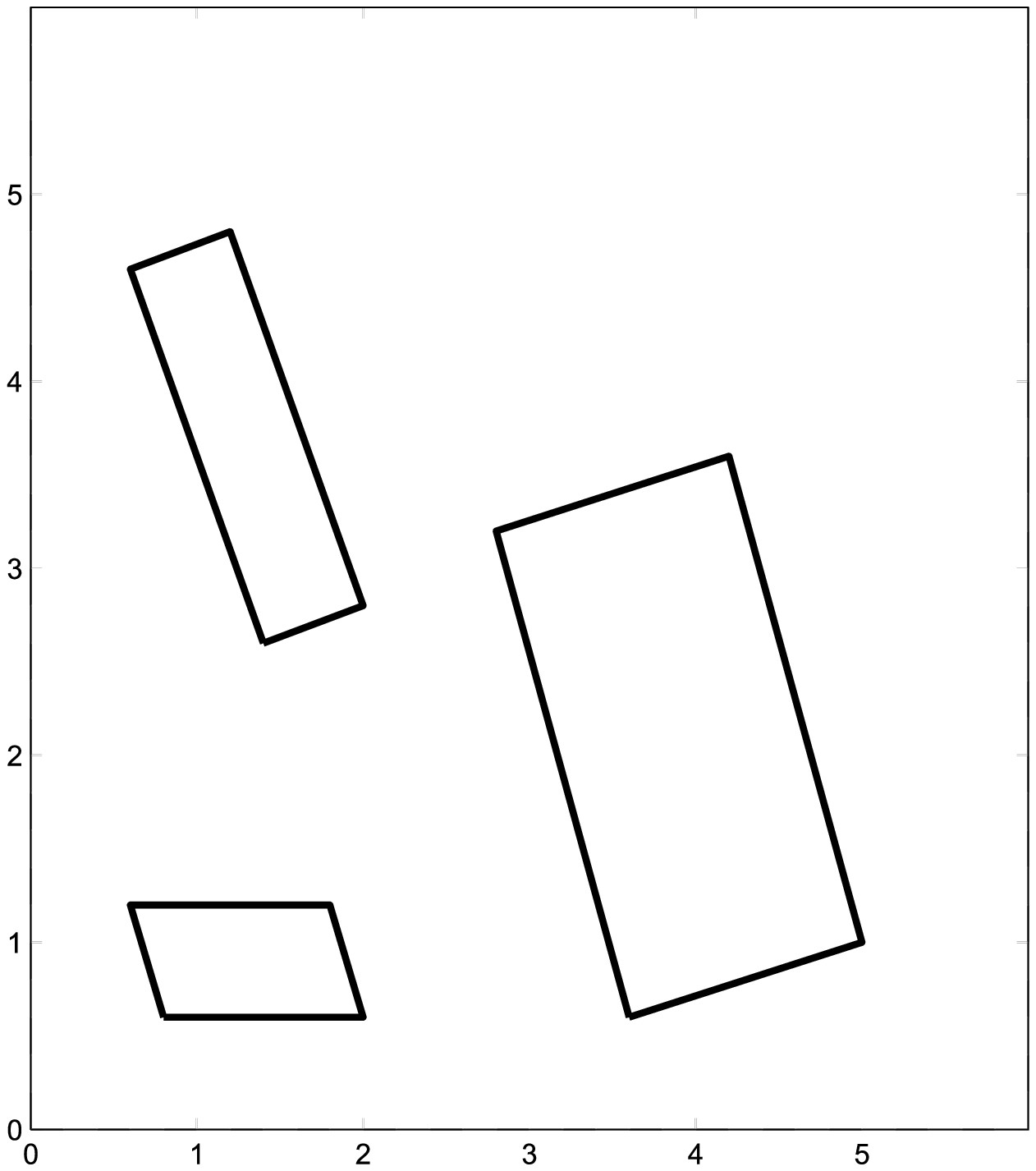}
\includegraphics[width=2.2in,height=2.2in]{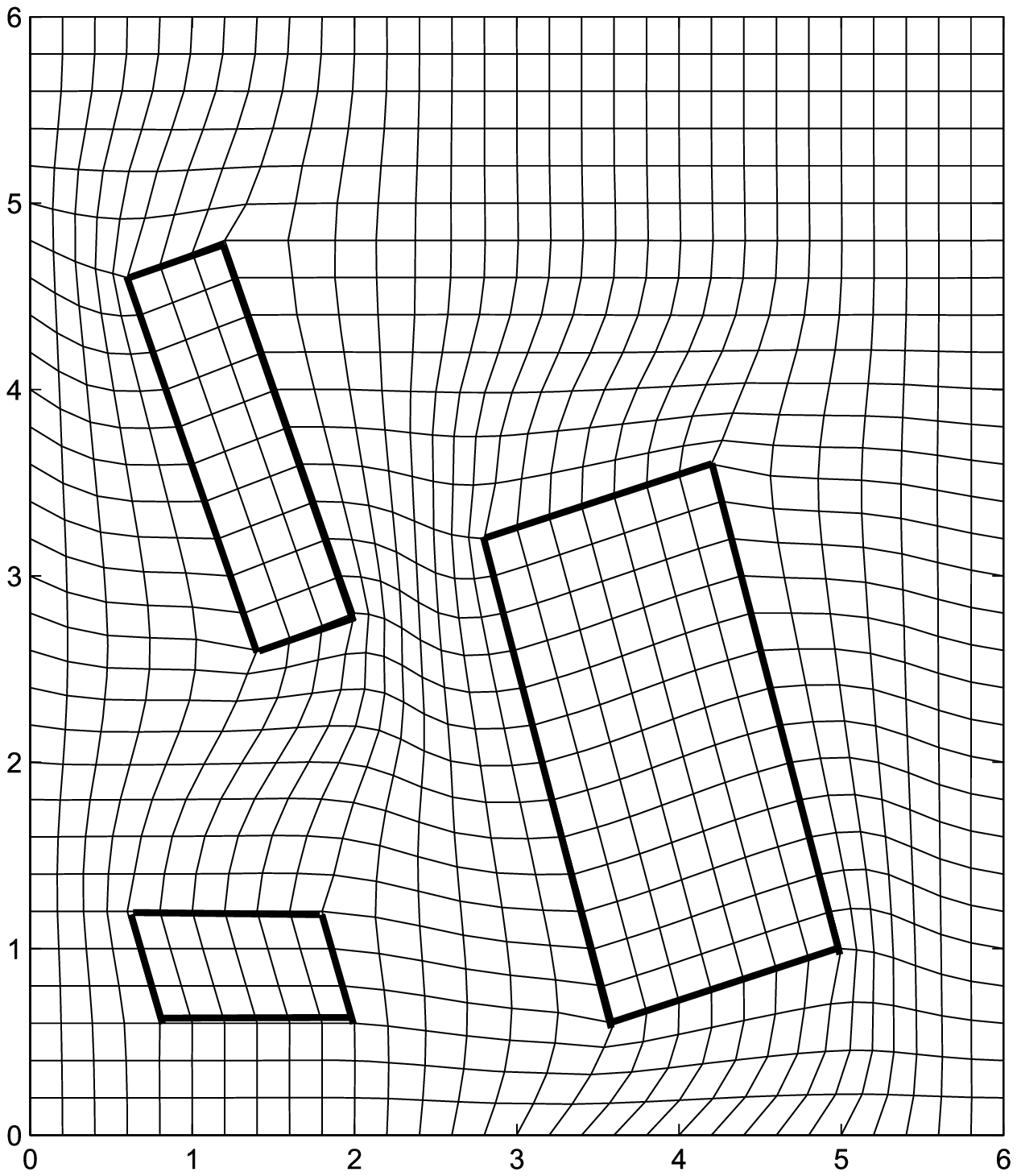}}
\par\end{centering}

\noindent \centering{}\caption{Example 3 with three QIAC.
On the right: the smooth mesh is supported on the internal boundaries.}
\label{Example3}
\end{figure}

\begin{figure}[ht]
\noindent \begin{centering}
{\includegraphics[width=2.2in,height=2.2in]{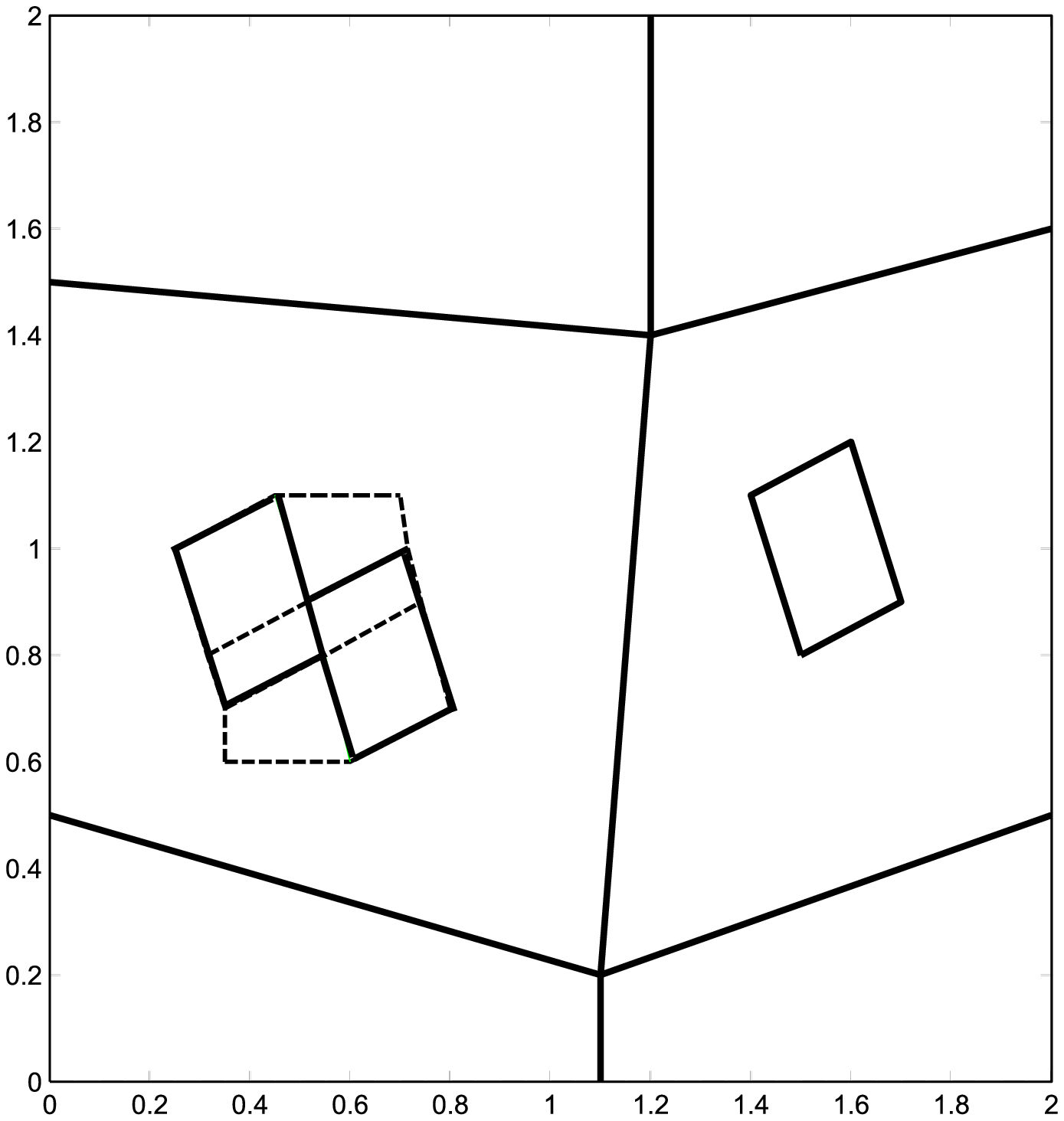}
\includegraphics[width=2.2in,height=2.2in]{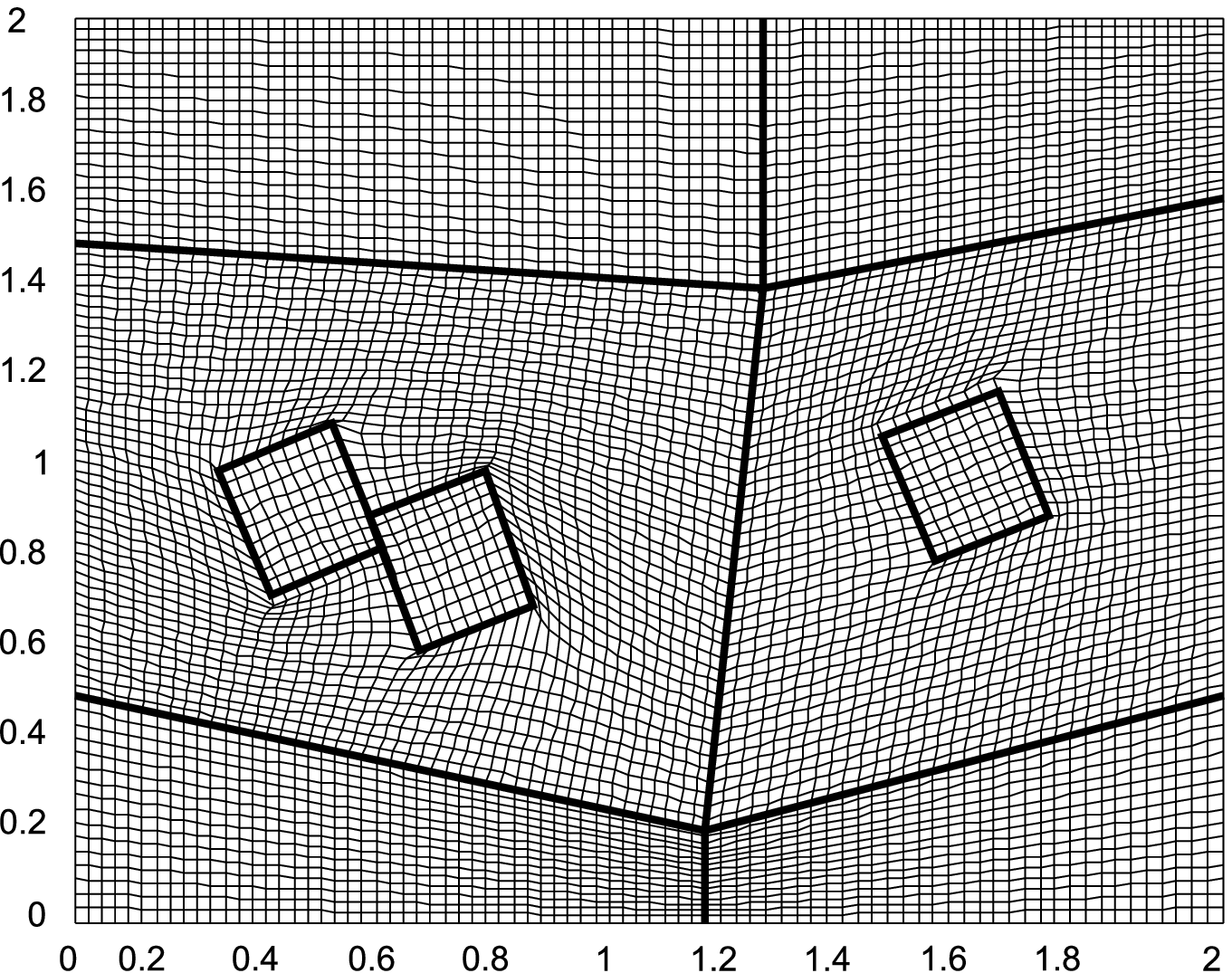}}
\par\end{centering}

\noindent \centering{}\caption{Example 4 with three SIAC, one QIAC, and one IQIAC.
On the right: the smooth mesh is supported on the internal boundaries.}
\label{Example4}
\end{figure}

\begin{figure}[ht]
\noindent \begin{centering}
{\includegraphics[width=2.2in,height=2.2in]{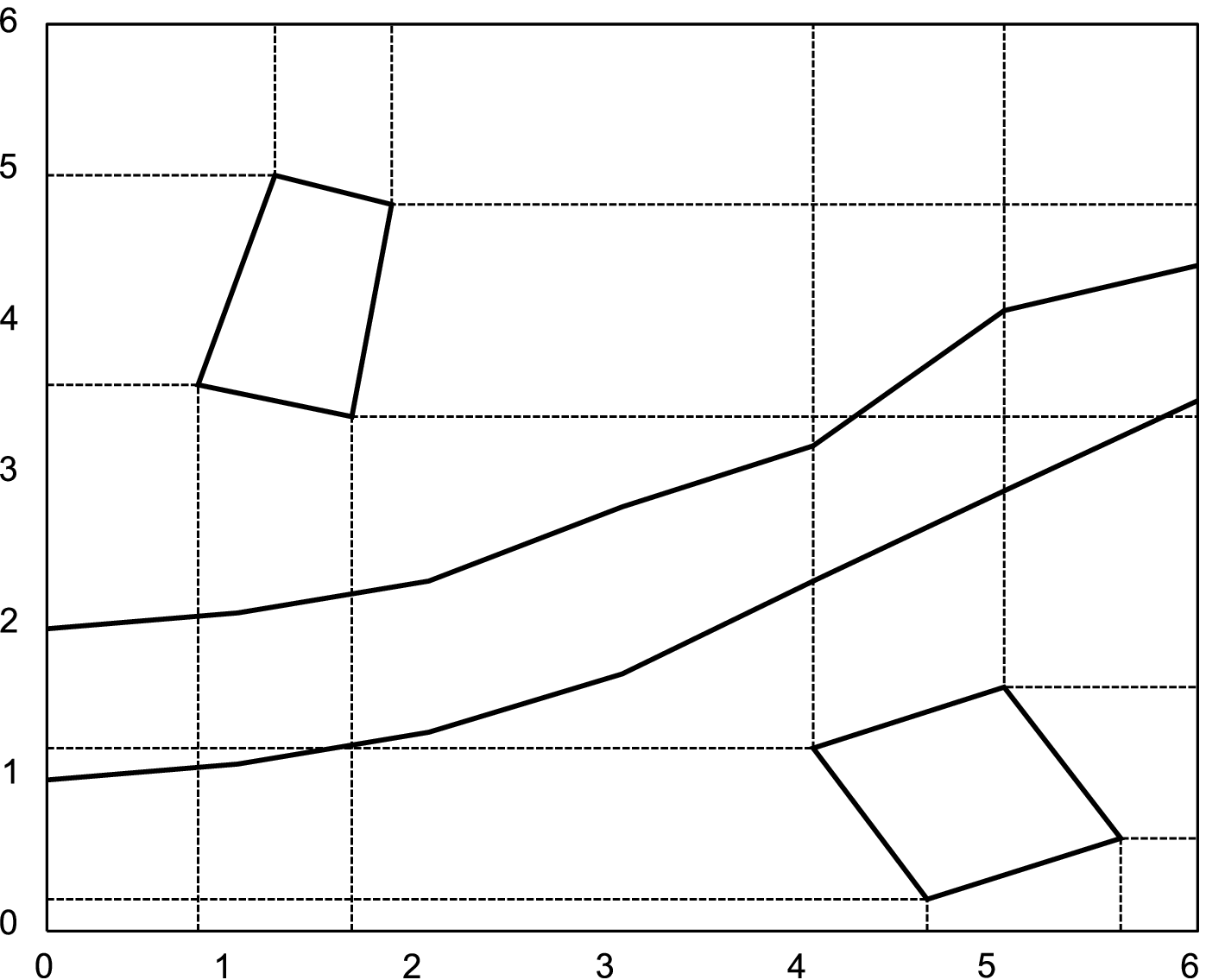}
\includegraphics[width=2.2in,height=2.2in]{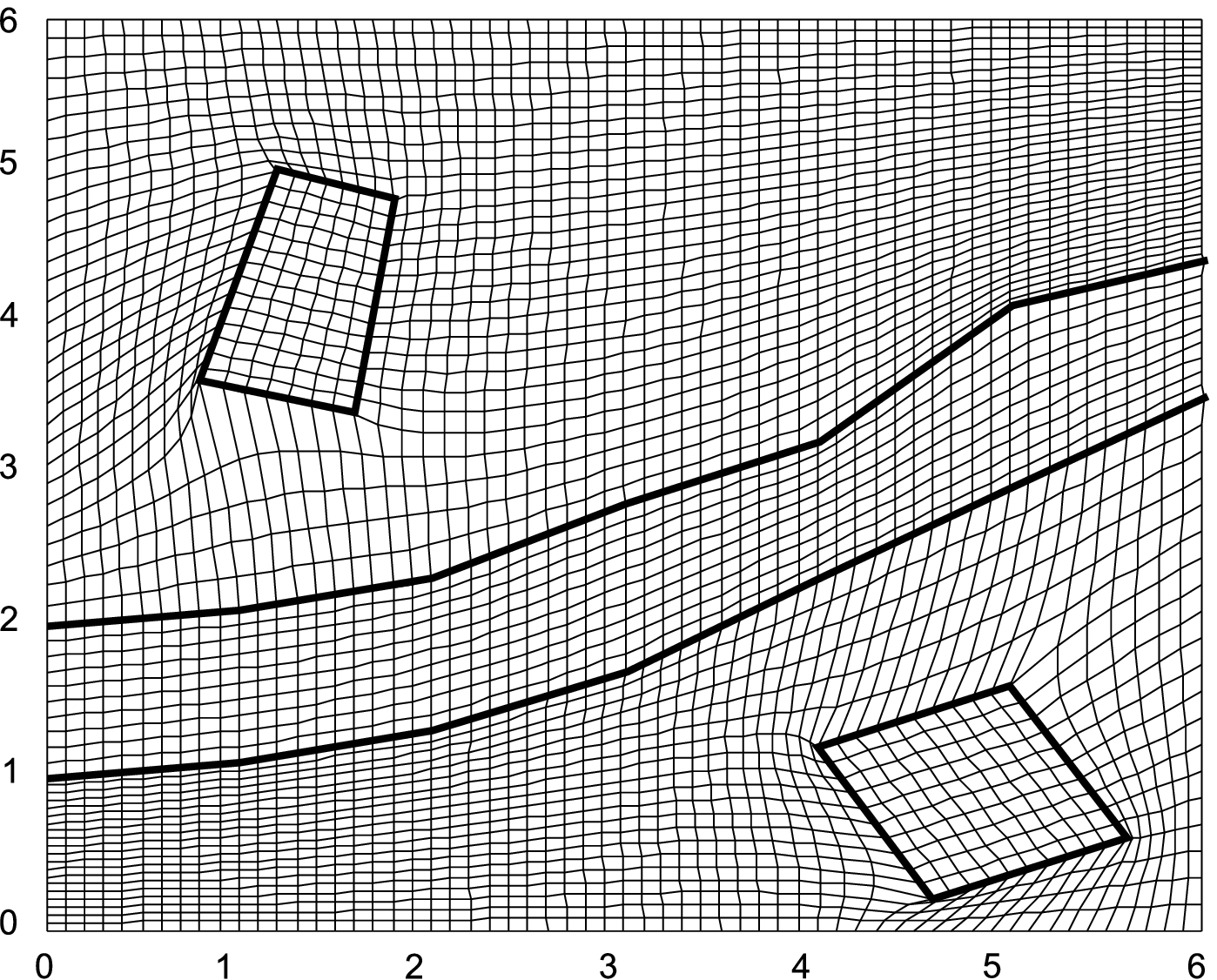}}
\par\end{centering}

\noindent \centering{}\caption{Example 5 with one channel and two QIAC.
On the right: the smooth mesh is supported on the internal boundaries.
This example models a channel of high permeability in a reservoir
and the drainage areas of two producer wells in the vicinity of the channel.}
\label{Example5}
\end{figure}

\begin{figure}[ht]
\noindent \begin{centering}
{\includegraphics[width=2.2in,height=2.2in]{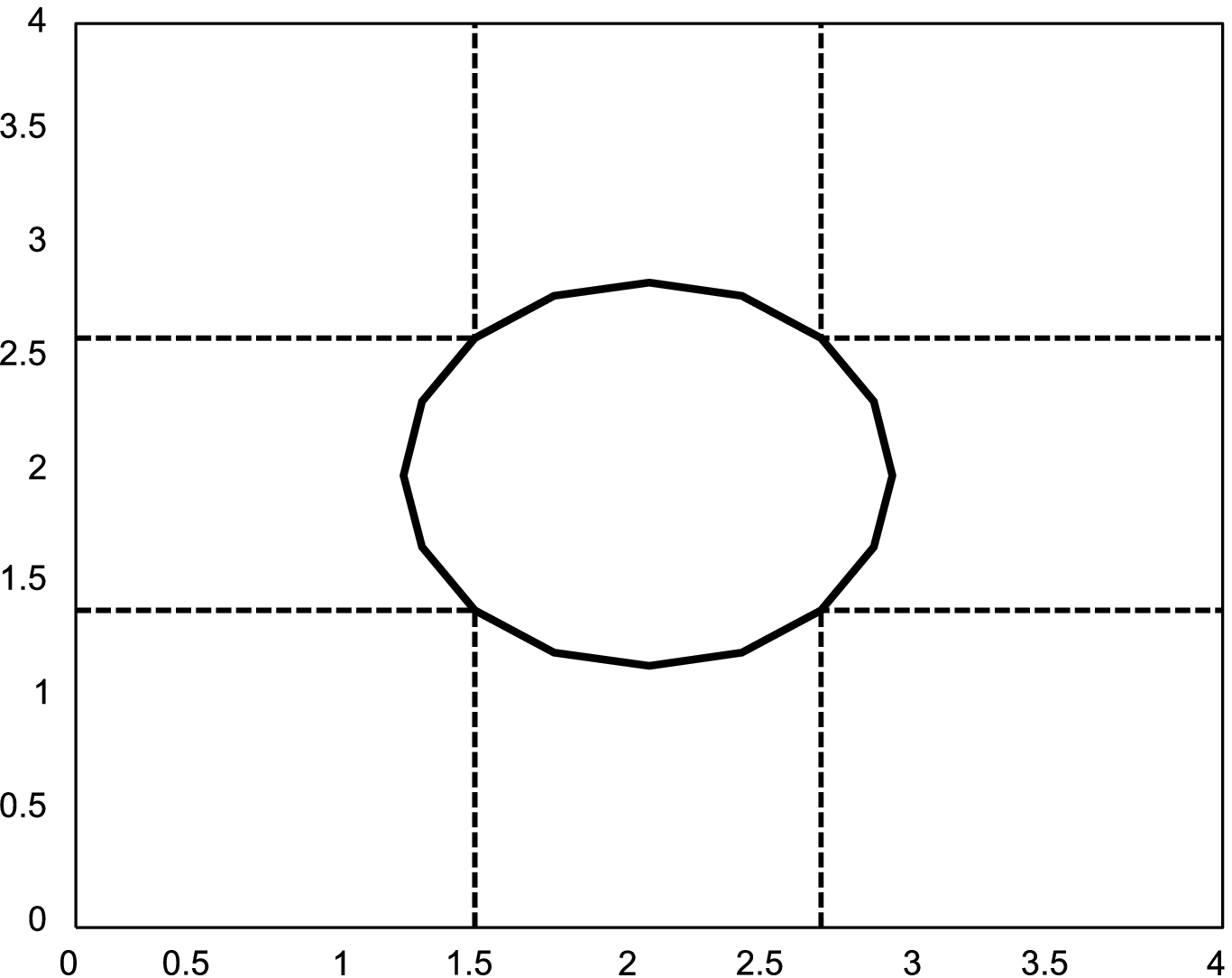}
\includegraphics[width=2.2in,height=2.2in]{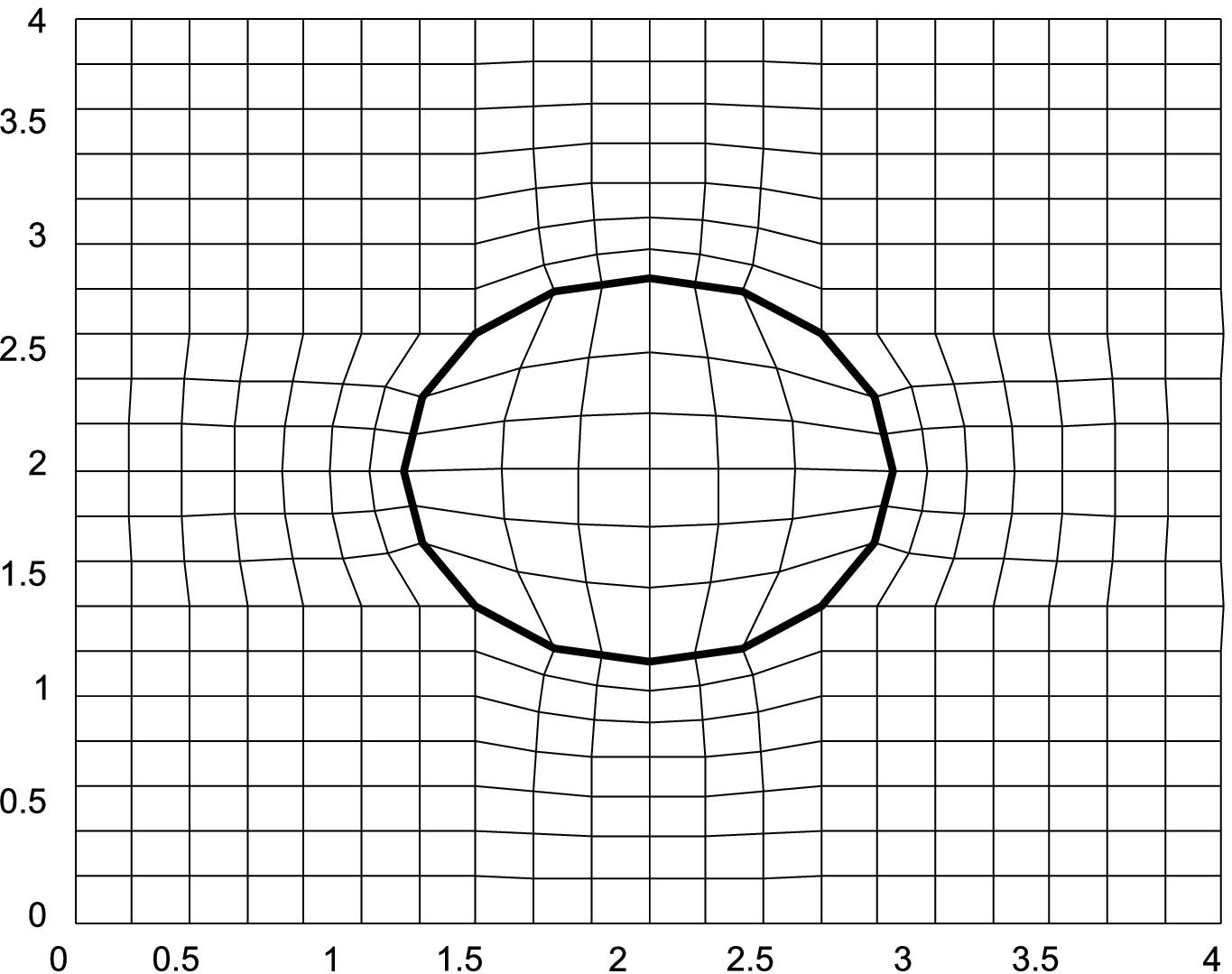}}
\par\end{centering}

\noindent \centering{}\caption{Example 6 with one circumference.
On the right: the smooth mesh is supported on the internal boundaries.
This example models a very low permeability obstacle within a reservoir.}
\label{Example6}
\end{figure}

\section{Conclusions} \label{QGG-conclusiones}

\begin{enumerate}
\item The physical domain was discretized using non orthogonal structured (valence 4) grid 
consisting only of quadrilaterals honoring the internal structures of a reservoir.
\item The methodology does not need for previous step where triangles are generated.
\item The SANE algorithm is a robust option for solving the nonlinear system 
associated to the methodology for generation of 2D quadrilateral meshes 
adapted to internal boundaries.
This does not involve nesting of iterative methods to solve the nonlinear system.
\item The mesh generation methodology involves a single smoothing process
compared to previous works.
\item The methodology was implemented in fortran 90, taking advantage of
the potential of the language to create structures adapted to the 
raised geometric problem.
\item Additionally, we suggest to explore the possibility of implementing a code
that exploits the implicit parallelism in the methodology proposed in this paper.
Particularly, the distribution of work that arises naturally, is to
process each internal boundary on each processor of a cluster 
(parallel architecture computer).
\end{enumerate}

\section*{References}

\bibliography{QGG-BuitragoJimenez}

\begin{thebibliography}{29}
\providecommand{\natexlab}[1]{#1}
\providecommand{\url}[1]{\texttt{#1}}
\expandafter\ifx\csname urlstyle\endcsname\relax
  \providecommand{\doi}[1]{doi: #1}\else
  \providecommand{\doi}{doi: \begingroup \urlstyle{rm}\Url}\fi

\bibitem[Amsden and Hirt(1973)]{AmsdenHirt1973}
A.A. Amsden and W.~Hirt.
\newblock A simple scheme for generating general curvilinear grids.
\newblock \emph{Journal of Computational Physics}, 11, 1973.
\newblock doi: 10.1016/0021-9991(73)90078-8.

\bibitem[Berndt et~al.(2008)Berndt, Moulton, and Hansen]{Berndt_etal2008}
M.~Berndt, J.D. Moulton, and G.~Hansen.
\newblock Efficient nonlinear solvers for {Laplace-Beltrami} smoothing of
  three-dimensional unstructured grids.
\newblock \emph{Comput Math Appl}, 55\penalty0 (12):\penalty0 2791--2806, 2008.
\newblock http://math.lanl.gov/~berndt/Papers/setup-paper.pdf, doi:
  10.1016/j.camwa.2007.10.029.

\bibitem[Borouchaki and Frey(1996)]{Borouchakietal1996}
H.~Borouchaki and P.~Frey.
\newblock Adaptive triangular-quadrilateral mesh generation.
\newblock Technical Report RR-2960, INRIA, May 1996.
\newblock https://hal.inria.fr/inria-00073738.

\bibitem[Borouchaki and Frey(1998)]{BorouchakiFrey1998}
H.~Borouchaki and P.J. Frey.
\newblock Adaptive triangular-quadrilateral mesh generation.
\newblock \emph{Int J Numer Meth Eng}, 41\penalty0 (5):\penalty0 915--934,
  1998.
\newblock http://www.ann.jussieu.fr/frey/publications/ijnme4198.pdf,
  doi:10.1002/(SICI)1097-0207(19980315)41:5<915::AID-NME318>3.0.CO;2-Y.

\bibitem[Borregales et~al.(2009)Borregales, Jim\'enez, and
  Buitrago]{Borregales_etal2009}
M.~Borregales, O.~Jim\'enez, and S.~Buitrago.
\newblock Generaci\'on de mallas de cuadril\'ateros para yacimientos
  bidimensionales con fronteras internas complejas (quadrilateral mesh
  generation for bidimensional reservoirs with complex internal boundaries).
\newblock In \emph{Memorias de las VI Encuentro Colombia-Venezuela de
  Estad\'{\i}stica y VIII Jornadas de Aplicaciones Matem\'aticas}, pages 1--10.
  Universidad de Carabobo, octuber 2009.
\newblock {ISBN:} 9789801240631, doi: 10.13140/RG.2.1.4791.9520.

\bibitem[Buitrago et~al.(2016)Buitrago, Sosa, and Jim\'enez]{Buitragoetal2015}
S.~Buitrago, G.~Sosa, and O.~Jim\'enez.
\newblock An upwind finite volume method on non-orthogonal quadrilateral meshes
  for the convection diffusion equation in porous media.
\newblock \emph{Appl Anal}, 95\penalty0 (10):\penalty0 2203--2223, 2016.
\newblock doi: 10.1080/00036811.2015.1064520.

\bibitem[Evasi-Yadecuri and Mahani(2009)]{Evasi_etal2009}
M.~Evasi-Yadecuri and H.~Mahani.
\newblock Unstructured coarse grid generation for reservoir flow simulation
  using backgroung grid approach.
\newblock \emph{SPE Middle East Oil and Gas Show and Conference}, SPE
  120170:\penalty0 1--13, march 2009.
\newblock doi: 10.2118/120170-MS.

\bibitem[Fortunato and Persson(2016)]{fortunatoetal2016}
M.~Fortunato and P.-O. Persson.
\newblock High-order unstructured curved mesh generation using the winslow
  equations.
\newblock \emph{Journal of Computational Physics}, 307:\penalty0 1--14, 2016.
\newblock doi: 10.1016/j.jcp.2015.11.020.

\bibitem[Hyman et~al.(2000)Hyman, Li, Knupp, and Shashkov]{Hyman2000}
J.M. Hyman, S.~Li, P.~Knupp, and M.~Shashkov.
\newblock An algorithm for aligning a quadrilateral grid with internal
  boundaries.
\newblock \emph{Journal of Computational Physics}, 163:\penalty0 133--149,
  2000.
\newblock doi: 10.1006/jcph.2000.6560.

\bibitem[Khattri(2009)]{Khattri2009}
S.K. Khattri.
\newblock An adaptive quadrilateral mesh in curved domains.
\newblock \emph{Serdica J Comput, {ISSN:} 1314-7897}, 3\penalty0 (3):\penalty0
  249--268, September 2009.
\newblock
  http://serdica-comp.math.bas.bg/index.php/serdicajcomputing/article/view/75/77.

\bibitem[Knupp and Steinberg(1993)]{Knupp_etal1993}
P.~Knupp and S.~Steinberg.
\newblock \emph{Fundamentals of Grid Generation}.
\newblock CRC Press, 1993.
\newblock {ISBN:} 978-0849389870.

\bibitem[Knupp(1992)]{knupp1992}
P.M. Knupp.
\newblock A robust elliptic grid generator.
\newblock \emph{Journal of Computational Physics}, 100\penalty0 (2):\penalty0
  409--418, 1992.
\newblock doi: 10.1016/0021-9991(92)90247-V.

\bibitem[{La Cruz} and Raydan(2003)]{LaCruzRaydan2003}
W.~{La Cruz} and M.~Raydan.
\newblock Nonmonotone spectral methods for large-scale nonlinear systems.
\newblock \emph{Optim Meth Softw}, 18\penalty0 (5):\penalty0 583--599, 2003.
\newblock doi: 10.1080/10556780310001610493.

\bibitem[Lee et~al.(2003)Lee, Kim, Cho, and T.-W.]{Kyu-Yeul_etal2003}
K.-Y. Lee, I.-I. Kim, D.-Y. Cho, and Kim T.-W.
\newblock An algorithm for automatic {2D} quadrilateral mesh generation with
  line constraints.
\newblock \emph{Compt Aided Design}, 35:\penalty0 1055--1068, 2003.
\newblock doi: 10.1016/S0010-4485(02)00145-8.

\bibitem[Lin et~al.(2007)Lin, Tang, Joneja, and Bao]{Lin_etal2007}
H.~Lin, K.~Tang, A.~Joneja, and H.~Bao.
\newblock Generating strictly non-self-overlapping structured quadrilateral
  grids.
\newblock \emph{Compt Aided Design}, 39\penalty0 (9):\penalty0 709--718, 2007.
\newblock doi: 10.1016/j.cad.2007.02.001.

\bibitem[Liu et~al.(2011)Liu, Xing, and Guan]{liuetal2011}
Y.~Liu, H.L. Xing, and Z.~Guan.
\newblock An indirect approach for automatic generation of quadrilateral meshes
  with arbitrary line constraints.
\newblock \emph{International Journal for Numerical Methods in Engineering},
  87\penalty0 (2):\penalty0 906–922, 2011.
\newblock doi: 10.1002/nme.3145.

\bibitem[Parka et~al.(2007)Parka, Nohb, Jangc, and Kangb]{Parka_etal2007}
C.~Parka, J.S. Nohb, I.S. Jangc, and J.M. Kangb.
\newblock A new automated scheme of quadrilateral mesh generation for randomly
  distributed line constraints.
\newblock \emph{Compt Aided Design}, 39:\penalty0 258--267, 2007.
\newblock doi: 10.1016/j.cad.2006.12.002.

\bibitem[Rathod et~al.(2014)Rathod, Rathod, Shivaram, Hariprasad, Vijayakumar,
  and Sugantha~Devie]{rathodetal2014}
H.T. Rathod, B.~Rathod, K.T. Shivaram, A.S. Hariprasad, K.V. Vijayakumar, and
  K.~Sugantha~Devie.
\newblock A new approach to an all quadrilateral mesh generation over arbitrary
  linear polygonal domains for finite element analysis.
\newblock \emph{International Journal of Engineering and Computer Science},
  3\penalty0 (4):\penalty0 5224--5272, 2014.
\newblock http://www.ijecs.in/issue/v3-i4/3

\bibitem[Ruiz-Giron\'es and Sarrate(2008)]{RuizGironesetal2008}
E.~Ruiz-Giron\'es and J.~Sarrate.
\newblock Discretizaci\'on de superficies m\'ultiplemente conexas mediante
  submapping.
\newblock \emph{Revista Internacional de M\'etodos Num\'ericos para C\'alculo y
  Dise\~no en Ingenier\'{\i}a}, 24\penalty0 (2):\penalty0 163--181, 2008.
\newblock
  https://upcommons.upc.edu/bitstream/handle/2099/10425/IV

\bibitem[Ruiz-Giron\'es and Sarrate(2010)]{RuizGironesetal2010}
E.~Ruiz-Giron\'es and J.~Sarrate.
\newblock Generation of structured meshes in multiply connected surfaces using
  submapping.
\newblock \emph{Advances in Engineering Software}, 41\penalty0 (2):\penalty0
  379--387, 2010.
\newblock doi: 10.1016/j.advengsoft.2009.06.009.

\bibitem[Sarrate and Huerta(2000)]{SarrateHuerta2000}
J.~Sarrate and A.~Huerta.
\newblock Efficient unstructured quadrilateral mesh generation.
\newblock \emph{Int J Numer Meth Eng}, 49:\penalty0 1327--1350, 2000.
\newblock doi:10.1002/1097-0207(20001210)49:10<1327::AID-NME996>3.0.CO;2-L.

\bibitem[Sarrate and Huerta(2002)]{SarrateHuerta2002}
J.~Sarrate and A.~Huerta.
\newblock Generaci\'on autom\'atica de mallas no estructuradas y formadas
  exclusivamente por cuadril\'ateros sobre superficies curvas en {$R^3$}.
\newblock \emph{Revista Internacional de M\'etodos Num\'ericos para C\'alculo y
  Dise\~no en Ingenier\'{\i}a}, 18:\penalty0 79--93, 2002.
\newblock https://upcommons.upc.edu/bitstream/handle/2099/4640/RR181E.pdf.

\bibitem[Thompson et~al.(1974)Thompson, Thames, and Mastin]{Thompson_etal1974}
J.F. Thompson, F.C. Thames, and C.W. Mastin.
\newblock Automatic numerical generation of body-fitted curvilinear coordinate
  system for field containing any number of arbitrary two-dimensional bodies.
\newblock \emph{Journal of Computational Physics}, 15\penalty0 (9):\penalty0
  299--319, 1974.
\newblock doi: 10.1016/0021-9991(74)90114-4.

\bibitem[Thompson et~al.(1977)Thompson, Thames, and Mastin]{Thompson_etal1977}
J.F. Thompson, F.C. Thames, and C.W. Mastin.
\newblock Tomcat - a code for numerical generation of boundary-fitted
  curvilinear coordinate systems on fields containing any number of arbitrary
  two-dimensional bodies.
\newblock \emph{Journal of Computational Physics}, 24\penalty0 (3):\penalty0
  274--302, 1977.
\newblock doi: 10.1016/0021-9991(77)90038-9.

\bibitem[Thompson et~al.(1985)Thompson, Warsi, and Mastin]{Thompson_etal1985}
J.F. Thompson, Z.U.A. Warsi, and C.W. Mastin.
\newblock \emph{Numerical Grid Generation: Foundations and Applications}.
\newblock North-Holland, 1985.
\newblock {ISBN:} 9780444009852.

\bibitem[Valido et~al.(2012)Valido, Jim\'enez, and Buitrago]{Valido_etal2012}
J.~Valido, O.~Jim\'enez, and S.~Buitrago.
\newblock Uso de m\'etodos tipo gradiente espectral para generar mallas {2D} de
  cuadril\'ateros alineadas a fronteras internas complejas de yacimientos
  petrol\'{\i}feros (spectral gradient methods to generate {2D} quadrilateral
  meshes aligned to complex internal boundaries of petroleum resevoirs).
\newblock In M.~Cerrolaza R.~Davila, G.~Uzcategui, editor, \emph{Avances en
  Simulaci\'on Computacional y Modelado Num\'erico}, pages MM55--MM60.
  Socieadad Venezolana de M\'etodos Num\'ericos en Ingenier\'{\i}a, march 2012.
\newblock {ISBN:} 9789807161039, doi:10.13140/2.1.3794.5289.

\bibitem[Villamizar and Acosta(2009)]{Villamizar_etal2009}
V.~Villamizar and S.~Acosta.
\newblock Generation of smooth grids with line control for scattering from
  multiple obstacles.
\newblock \emph{Math Comput Simulat}, 79:\penalty0 2506--2520, 2009.
\newblock doi: 10.1016/j.matcom.2009.01.006.

\bibitem[Villamizar et~al.(2007)Villamizar, Rojas, and
  Mabey]{Villamizar_etal2007}
V.~Villamizar, O.~Rojas, and J.~Mabey.
\newblock Generation of curvilinear coordinates on multiply connected regions
  with boundary singularities.
\newblock \emph{Journal of Computational Physics}, 223:\penalty0 571--588,
  2007.
\newblock doi: 10.1016/j.jcp.2006.09.028.

\bibitem[Winslow(1997)]{Winslow1997}
A.M. Winslow.
\newblock Numerical solution of the quasilinear poisson equation in a
  nonuniform triangle mesh, reprinted from volume 1, number 2, november 1966,
  pages 149–172.
\newblock \emph{Journal of Computational Physics}, 135\penalty0 (2):\penalty0
  128--138, 1997.
\newblock http://www.sciencedirect.com/science/article/pii/S0021999197956989,
  doi: 10.1006/jcph.1997.5698.

\end{thebibliography}

\end{document}